\theoremstyle{definition}
\newtheorem{theorem}{Theorem}[section]
\newtheorem*{maintheorem}{Theorem}
\newtheorem{corollary}[theorem]{Corollary}
\newtheorem{lemma}[theorem]{Lemma}
\newtheorem{proposition}[theorem]{Proposition}
\theoremstyle{definition}
\newtheorem{definition}[theorem]{Definition}
\newtheorem{example}[theorem]{Example}
\newtheorem{remark}[theorem]{Remark}
\numberwithin{equation}{subsection}
\newcommand{\NN}{\mathbb{N}}
\newcommand{\ZZ}{\mathbb{Z}}
\newcommand{\m}{\mathfrak{m}}
\newcommand{\cK}{\mathcal{K}}
\newcommand{\cF}{\mathcal{F}}
\newcommand{\cM}{\mathcal{M}}
\newcommand{\Spec}{\operatorname{Spec}}
\newcommand{\Tor}{\operatorname{Tor}}
\newcommand{\rk}{\operatorname{rank}}
\newcommand{\Hom}{\operatorname{Hom}}
\newcommand{\Ext}{\operatorname{Ext}}
\newcommand{\Ker}{\operatorname{Ker}}
\newcommand{\Length}{\operatorname{length}}
\newcommand{\Max}{\operatorname{max}}
\newcommand{\Ht}{\operatorname{ht}}
\newcommand{\Cech}{ \check{\rm{C}}}
\newcommand{\FDer}[1]{\stackrel{#1}{\to}}
\newcommand\wS{\widetilde{S}}
\newcommand\wg{\widetilde{g}}
\newcommand{\ba}{\alpha}
\newcommand{\bb}{\beta}
\newcommand{\be}{\mathbf{e}}
\newcommand{\gmod}{\operatorname{* mod}}
\newcommand{\Sq}{\operatorname{Sq}}
\newcommand\kk{\Bbbk}
\newcommand\pol{\operatorname{\mathsf{pol}_\ba}}
\newcommand\dpol{\operatorname{\mathsf{pol}^\ba}}
\newcommand\wR{\widetilde{R}}
\newcommand\wI{\widetilde{I}}
\newcommand{\fs}{\mathfrak{s}}
\newcommand{\fr}{\mathfrak{r}}
\newcommand{\zero}{\mathbf{0}}
\newcommand{\supp}{\mbox{\rm{supp}} }
\newcommand{\bD}{\mathbf{D}}
\newcommand{\bN}{{\mathbb{N}}}
\newcommand{\bZ}{{\mathbb{Z}}}
\newcommand{\gs}{\sigma}
\newcommand\rc{{\rm c}}
\newcommand{\fl}{\mathfrak{l}}
\newcommand\lk{{\mathrm{lk}}}
\newcommand{\fp}{\mathfrak{p}}
\newcommand{\fn}{\mathfrak{n}}
\newcommand{\fq}{\mathfrak{q}}
\begin{document}
\newcommand{\tens}{\otimes}
\newcommand{\hhtest}[1]{\tau ( #1 )}
\renewcommand{\hom}[3]{\operatorname{Hom}_{#1} ( #2, #3 )}

\title{Properties of Lyubeznik numbers under localization and polarization}
\author{Arindam Banerjee}
\author{Luis N\'u\~nez-Betancourt}
\author{Kohji Yanagawa}
\maketitle

\begin{abstract} 
We exhibit a global bound for the Lyubeznik numbers of a ring of prime characteristic. In addition, we show that for a monomial ideal, the Lyubeznik numbers of the quotient rings of its radical and its polarization are the same. Furthermore, we present examples that show striking behavior of the Lyubeznik numbers under localization. We also show related results for generalizations of the Lyubeznik numbers.
\end{abstract}

\section{Introduction}
In $1993$ Lyubeznik \cite{LyuDMod} introduced a family of invariants for a local ring containing a field, $R,$ today called Lyubeznik numbers and denoted by $\lambda_{i,j}(R)$ (see Section \ref{Sec LC-LyuNum}). These numbers
have been shown to have multiple connections; for instance, they relate to  singular and \`etale cohomology \cite{B-B,GarciaSabbah,LyuDMod}, to the Hochster-Huneke graph \cite{LyuInvariants,W}, and to projective varieties \cite{WZ-projective,Switala}.
These connections have motivated multiple generalizations; for instance, for mixed characteristic rings \cite{NuWiMixChar}, and rings of equal-characteristic from a differential perspective \cite{AM-Proc,NuWi1}.

In this article, we study the behavior of these invariants under
localization. 
The first result obtained in this context is that the Lyubeznik numbers are bounded globally over rings of positive characteristic, which resemble  behavior of Bass numbers under localization (see \ref{BoundFG}).  

\begin{maintheorem}[see Theorem \ref{Main Localization}]
Let $R$ be a ring which is a quotient of a regular Noetherian regular ring of finite dimension and positive characteristic $p>0.$
Then, there exists a positive integer $B$ such that
$$
\lambda_{i,j}(R_{\fp})\leq B
$$
for every $i,j\in\NN$ and ${\fp}\in\Spec(R).$
\end{maintheorem}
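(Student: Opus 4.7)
The plan is to reduce the problem to a uniform bound on the Bass numbers of a finite collection of $F$-finite $A$-modules. Write $R = A/I$, where $A$ is a regular Noetherian ring of characteristic $p > 0$ with $n := \dim A < \infty$. For $\fp \in \Spec R$, let $\fq \subset A$ be its preimage, so that $R_{\fp} = A_{\fq}/IA_{\fq}$, and set $n_{\fq} := \Ht(\fq) \leq n$. Since local cohomology commutes with localization, unwinding the definition of the Lyubeznik numbers yields
\begin{equation*}
\lambda_{i,j}(R_{\fp}) \;=\; \dim_{k(\fq)} \Ext^i_{A_{\fq}}\bigl( k(\fq),\, H^{n_{\fq}-j}_{I}(A)_{\fq} \bigr) \;=\; \mu^i_A\bigl( \fq,\, H^{c}_I(A) \bigr),
\end{equation*}
with $c = n_{\fq} - j$, where $\mu^i_A(\fq, -)$ denotes the $i$-th Bass number at $\fq$. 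Because $0 \leq c \leq n_{\fq} \leq n$, only the $n+1$ modules $\cM_c := H^c_I(A)$ for $c \in \{0, 1, \dots, n\}$ can contribute.

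By Lyubeznik's theory of $F$-modules, each $\cM_c$ is an $F$-finite $A$-module, and in particular $\InjDim_A(\cM_c) \leq \dim \Supp(\cM_c) \leq n$. Therefore $\mu^i_A(\fq, \cM_c) = 0$ for $i > n$, and only finitely many values of $i$ matter. It then remains to show that, for each fixed $c$ and $i$, the function $\fq \mapsto \mu^i_A(\fq, \cM_c)$ is uniformly bounded on $\Spec A$. The intended route is to establish that this function is constructible on the Noetherian space $\Spec A$; any constructible function on a Noetherian topological space attains only finitely many values, hence is bounded. Taking $B$ to be the maximum over the finitely many pairs $(c,i)$ of these bounds then produces the desired global bound.

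The main obstacle is precisely this last step: the uniform boundedness of Bass numbers for the $F$-finite modules $\cM_c$. These modules are typically not Noetherian, so the classical constructibility argument for coherent modules (as recalled in \ref{BoundFG}) does not apply directly. The key input is the $F$-module structure itself: each $\cM_c$ admits a finitely generated generator (a root) $N_c$ together with a compatible Frobenius map, and Bass numbers of $\cM_c$ can be controlled in terms of homological invariants of $N_c$, which do behave constructibly. Carrying out this reduction --- the positive-characteristic analog of classical constructibility for finitely generated modules --- is the technical heart of the argument, and the uniform constant $B$ will emerge from it together with the combinatorial count of relevant pairs $(c,i)$.
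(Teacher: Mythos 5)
Your reduction to uniform Bass-number bounds for the $F$-finite $F$-modules $\cM_c = H^c_I(A)$ is correct and is exactly how the paper proceeds. You also correctly identify that the key input has to be the root of $\cM_c$, a finitely generated module whose Frobenius limit recovers $\cM_c$. However, what you call ``the technical heart of the argument'' is precisely the part you do not carry out, and that is where the whole content of the theorem lies, so as it stands the proposal is not a proof. You gesture at controlling $\mu^i(\fq,\cM_c)$ by ``homological invariants of $N_c$'' and then invoking ``constructibility,'' but you never state the comparison between Bass numbers of an $F$-module and Bass numbers of its root, and you never explain why those invariants are uniformly bounded.

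The paper closes the gap in two concrete steps that are much more direct than a constructibility argument. First, it uses Lemma~\ref{LemmaBass} (Lyubeznik's observation, \cite[Remark 2.13]{LyuFmod}): if $M \to \cF M$ is a root of $\cM$, then $\mu^i(\fp,\cM) \leq \mu^i(\fp,M)$, and moreover $M_\fp \to \cF_{S_\fp} M_\fp$ is a root of $\cM_\fp$ for every $\fp$, so this inequality holds uniformly over $\Spec S$. Second, it converts Bass numbers of the finitely generated module $M$ to Betti numbers via the Koszul complex (Remark~\ref{RemDuality}): $\mu^i(\fp,M) = \beta_{\dim S_\fp - i}(\fp,M)$. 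Then one takes a single (not necessarily minimal) free resolution $G_\bullet$ of $M$ over $S$; since $G_\bullet \otimes S_\fp$ is a free resolution of $M_\fp$, one gets $\beta_k(\fp,M) \leq \rk G_k$ for every $\fp$ and $k$, and $B := \max_k \rk G_k$ works. No constructibility on $\Spec A$ is needed, and in fact invoking it is a detour: the uniform bound comes from the elementary fact that ranks in one global free resolution dominate Betti numbers at every localization. Your proposal would become a proof if you replaced the constructibility sketch with these two explicit steps.
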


The previous claim was proven for algebras finitely generated over a field of characteristic zero or an algebraically closed field of prime characteristic  by Puthenpurakal \cite{TonyInjRes}.
We point out that his result deals only with localization at maximal ideals.

Unfortunately, the Lyubeznik numbers do not behave much better than it is stated in the previous theorem. We show an example of a Stanley-Reisner ring whose  highest Lyubeznik number could either decrease or increase under localization (see Example \ref{ExampleBadB}). In fact, we show a method to build a Stanley-Reisner ring with  all but one Lyubeznik number vanishing and with a localization that, surprisingly,  have many positive Lyubeznik numbers (see Remark \ref{Grow with Loc}). 

An operation related with localization of Stanley-Reisner rings is polarization (see Remark \ref{RelLocPol}). Given a monomial ideal $I$, not necessarily radical, in a polynomial ring $S$, 
we consider the polarization ideal $\widetilde{I}$ in
the polarization ring $\widetilde{S}.$
We compare the Lyubeznik number at the maximal homogeneous ideal of $S/\sqrt{I}$ 
and $\widetilde{S}/\widetilde{I}.$

\begin{maintheorem}[see Theorem \ref{Main Thm Pol LyuNum}]
Let $S=K[x_1,\ldots,x_n]$ be a polynomial ring, $\m=(x_1,\ldots,x_n)$ and $I\subset S$ be a monomial ring. Let $\widetilde{I}$ denote the polarization of $I,$ and  $\widetilde{S}=K[x_{r,s}]$ denote the polarization ring. Let $\fn=(x_{i,j}),$
and  $h=\dim(\widetilde{S}/\widetilde{I})-\dim(S/I)$.
Then,
$$
\lambda_{i-h,j-h}\left(S_\m/IS_\m\right)= \lambda_{i,j}\left(\widetilde{S}_\fn/\widetilde{I}\widetilde{S}_\fn\right).
$$
for every $i,j\in\NN.$
\end{maintheorem}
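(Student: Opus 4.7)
The plan is to realize the Lyubeznik-number shift as the cumulative effect of iteratively depolarizing $\widetilde{S}/\widetilde{I}$ by the $h$ linear elements of $\Theta$, each step contributing a single shift in both indices. I would start with the standard fact from polarization theory that $\Theta=\{x_{r,s}-x_{r,1}:s\geq 2\}$ consists of $h=N-n$ elements (where $N=\dim\widetilde{S}$; the equality uses that polarization preserves heights), forms a regular sequence on $\widetilde{S}/\widetilde{I}$, extends to a regular system of parameters of $\widetilde{S}_\fn$, and satisfies $\widetilde{S}_\fn/(\widetilde{I}+\Theta)\widetilde{S}_\fn\cong S_\m/IS_\m$ via $x_{r,s}\mapsto x_r$. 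Filtering $\Theta$ produces a chain of ``partial polarizations'' $I=\widetilde{I}^{(0)}\subset\widetilde{I}^{(1)}\subset\cdots\subset\widetilde{I}^{(h)}=\widetilde{I}$, where each $\widetilde{I}^{(s)}$ is a monomial ideal in a polynomial subring $\widetilde{S}^{(s)}$ of dimension $n+s$; adjacent stages are linked by a single difference $\theta_{s+1}$ with $\widetilde{S}^{(s+1)}/(\theta_{s+1})\cong\widetilde{S}^{(s)}$ and $\widetilde{I}^{(s+1)}$ mapping to $\widetilde{I}^{(s)}$ modulo $\theta_{s+1}$.

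The theorem then reduces to proving, for each $s$, the one-step shift $\lambda_{i,j}(\widetilde{S}^{(s+1)}_\fn/\widetilde{I}^{(s+1)})=\lambda_{i-1,j-1}(\widetilde{S}^{(s)}_\fn/\widetilde{I}^{(s)})$, which I would derive from the following lemma: if $(R,\fn,K)$ is a regular local ring containing a field, $J\subset R$ an ideal, and $\theta\in R$ part of a regular system of parameters and a non-zero-divisor on $R/J$, then
$$
\lambda_{i,j}(R/J) \;=\; \lambda_{i-1,\,j-1}(R/(J+\theta)) \qquad \text{for all } i,j.
$$
To prove the lemma, set $R'=R/\theta R$ and assume momentarily that $\theta$ acts as a non-zero-divisor on every $H^k_J(R)$. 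Applying $\Gamma_J$ to $0\to R\xrightarrow{\theta}R\to R'\to 0$ collapses the resulting long exact sequence into $H^k_{JR'}(R')\cong H^k_J(R)/\theta H^k_J(R)$. Resolving $K$ over $R$ by the Koszul complex on a regular system of parameters starting with $\theta$, the classical Koszul shift in cohomology (valid precisely because $\theta$ is regular on $H^k_J(R)$) yields
$$
\Ext^i_R(K,H^k_J(R)) \;\cong\; \Ext^{i-1}_{R'}(K,H^k_{JR'}(R')).
$$
Since $N-j=(N-1)-(j-1)$, setting $k=N-j$ aligns the local cohomology indices on both sides with the ones appearing in the respective Lyubeznik numbers, and Lyubeznik's independence theorem closes the lemma.

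The main obstacle is the regularity hypothesis: at every stage $s$ the element $\theta_{s+1}$ must act as a non-zero-divisor on every $H^k_{\widetilde{I}^{(s+1)}}(\widetilde{S}^{(s+1)}_\fn)$. Since $\widetilde{I}^{(s+1)}$ is monomial, Lyubeznik's finiteness theorem together with the natural multigrading forces $\Ass(H^k_{\widetilde{I}^{(s+1)}}(\widetilde{S}^{(s+1)}))$ to be a finite set of monomial primes. A linear form $x_{r,s'}-x_{r,1}$ lies in such a monomial prime if and only if both $x_{r,s'}$ and $x_{r,1}$ do, so the problem reduces to the purely combinatorial statement that no associated prime of $H^k_{\widetilde{I}^{(s+1)}}(\widetilde{S}^{(s+1)})$ contains both $x_{r,1}$ and $x_{r,s'}$. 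I would establish this via the Stanley--Reisner/squarefree-module description of $\Ass(H^k)$ for monomial ideals (in the spirit of Yanagawa's framework), exploiting the structural property of polarization that the sibling variables $x_{r,1}$ and $x_{r,s'}$ play combinatorially interchangeable roles in the generating monomials of $\widetilde{I}^{(s+1)}$, which prevents an associated prime from capturing both. Carrying out this combinatorial--homological verification is the genuine technical heart of the argument; the remainder of the proof is the formal bookkeeping sketched above.
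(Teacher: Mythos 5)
Your strategy is genuinely different from the paper's, which works entirely with the modules $\Ext^i_S(\Ext^j_S(S/I,\omega_S),\omega_S)$ via Yanagawa's polarization functors $\pol$, $\dpol$ and Ene--Okazaki's radical functor $\fr^*$, and never touches local cohomology modules or regular sequences. Your one-step lemma is correct \emph{given its hidden hypothesis}: for $R$ regular local and $\theta$ a parameter that is a non-zero-divisor on $R$ and on every $H^k_J(R)$, the Rees-type isomorphism $\Ext^{i}_R(K,H^k_J(R))\cong\Ext^{i-1}_{R/\theta}(K,H^k_J(R)/\theta H^k_J(R))$ combined with the collapse of the long exact sequence for $R\xrightarrow{\theta}R\to R/\theta$ indeed gives $\lambda_{i,j}(R/J)=\lambda_{i-1,j-1}(R/(J+\theta))$.

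The genuine gap is exactly the hypothesis you flag as the ``technical heart,'' and the justification you sketch does not work. Sibling variables $x_{r,1}$ and $x_{r,s}$ (for $s\geq 2$) are \emph{not} combinatorially interchangeable in $\widetilde{I}$: by the nested structure of polarization, $x_{r,1}$ divides every minimal generator that $x_{r,s}$ divides but not conversely (e.g.\ for $I=(x^2,xy)$, $\widetilde I=(x_1x_2, x_1y_1)$ and there is no symmetry swapping $x_1\leftrightarrow x_2$). This asymmetry \emph{does} show that no minimal vertex cover (hence no minimal prime of $\widetilde I$) contains both siblings --- a nice observation --- but the associated primes of the higher modules $H^k_{\widetilde{I}^{(s)}}(\widetilde{S}^{(s)})$ are in general embedded, and no argument is offered for why they avoid siblings. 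Worse, even the weakest instance of your claim, that $\fn\notin\Ass H^k_{\widetilde I}(\widetilde S)$ for all $k$, is \emph{equivalent} to $\lambda_{0,j}(\widetilde S_\fn/\widetilde I\widetilde S_\fn)=0$ for all $j$, which for $h\geq 1$ is precisely the $i=0$ case of the theorem you are trying to prove; so any route to the NZD claim must independently establish a special case of the conclusion, and the same circularity recurs at every intermediate partial-polarization stage. Until the non-zero-divisor property on all $H^k$ is proved by means independent of the theorem --- plausibly by relating $H^k_{\widetilde I}(\widetilde S)$ to $\pol$ of the corresponding $\Ext$-module, i.e.\ by the same machinery the paper uses --- the proof is incomplete.
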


This result gives surprisingly different behavior of Lyubeznik numbers under localization and polarization. 
We also study the behavior of the
the generalized Lyubeznik numbers, $\lambda^0_i(R)$, for rings of equal characteristic under localization and polarization for Stanley-Reisner rings (see Propositions \ref{Prop Loc Gral LyuNum} and \ref{Prop Pol Gral LyuNum}). 
As a consequence of our methods, we observe similar behavior for  finer invariants given by the multiplicities of characteristics cycles.

\section{Background}
\subsection{Local cohomology and Lyubeznik numbers}\label{Sec LC-LyuNum}
In this section we recall definitions and properties of local cohomology and Lyubeznik numbers that we discuss in further sections. We refer the interested reader to \cite{BroSharp,TwentyFourHours} for local cohomology and \cite{SurveyLyuNum} for Lyubeznik numbers.

Let $R$ be a Noetherian ring, $I \subset R$ an ideal, and  $M$ an $R$-module. Suppose that $I$ is generated by $\underline{f}=f_1,\ldots, f_\ell.$
We consider the $\Cech$ech complex:
$$
C^{\bullet}(f;M): \-\- 0\to M \to \oplus_{j} M_{f_j} \to 
\oplus_{i<j} M_{f_i f_j}\to \ldots 
\to M_{f_1 \cdots f_\ell} \to 0.
$$

The $j$-th cohomology of this complex, $H^j_I(M),$ is called the \emph{local cohomology module} supported at the ideal $I=(f_1,\cdots,f_\ell)$. We point out that $H^j_I(M)$ does not depend on the choice of generators for $I.$ We note that local cohomology commutes with flat extensions; in particular, $\left(H^i_I(R)\right)_{\fp}=H^i_I(R_{\fp})$ for every prime ideal ${\fp}\subset R.$

\begin{definition}
Let $M$ be an $R$-module and ${\fp}$ be a prime ideal.
We define  \emph{the $i$-th Bass number of $M$ with respect to $\fp$}, 
denoted $\mu^i(\fp, M),$ as $\dim_{R_\fp/\fp R_\fp} \Ext^i_R(R_\fp/\fp R_\fp, M_\fp).$
\end{definition}

Suppose that $M$ is an $R$-module.
Let $0\to M\to E^0\to E^1\to \ldots $ be a minimal injective resolution for $M.$ 
The number of copies of  the injective hull of $R/\fp,$ $E_R(R/\fp),$ in $E^i$ is given by $\mu^i(\fp,M).$

\begin{theorem}[{see \cite[ Theorem $2.1$]{Huneke} and \cite[Theorem $3.4$]{LyuDMod}}]
Let $S$ be a regular ring containing a field.
Every local cohomology module $H^i_I(S)$ has finite Bass numbers.
That is, for every prime ideal ${\fp}\subset S$,
$\dim\Ext^{i}_{S_{\fp}}(S_{\fp}/{\fp} S_{\fp},H^j_I(S_{\fp}))$ is finite.
\end{theorem}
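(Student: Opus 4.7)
The plan is to split into the two characteristic cases, since the underlying mechanism is very different, and in each case reduce to showing that $H^j_I(S)$ belongs to a category of modules in which finiteness of Bass numbers is already known. Because local cohomology commutes with localization and Bass numbers are computed locally, I would first pass to the regular local ring $S_\fp$ and the ideal $IS_\fp$, so that it suffices to prove: for a regular local ring $(T,\fm,k)$ containing a field and any ideal $J\subset T$, the modules $\Ext^i_T(T/\fm,H^j_J(T))$ are finite-dimensional $k$-vector spaces.

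In positive characteristic $p>0$, I would use the approach of Huneke--Sharp. The Frobenius endomorphism of $T$ makes $H^j_J(T)$ into what Lyubeznik calls an $F$-finite $F$-module: one can realize it as the direct limit of $\Ext^j_T(T/J^{[p^e]},T)$ along Frobenius-induced maps, and each of these Ext modules is finitely generated over $T$. Given this structure, the key step is to show that if $N$ is an $F$-finite $F$-module then $\Hom_T(T/\fm,N)$ is finite-dimensional over $k$; one then iterates, noting that each $\Ext^i_T(T/\fm,N)$ can be computed as $\Hom_T(T/\fm,\cdot)$ applied to a term in a minimal injective resolution, and that the relevant cokernels again carry $F$-finite $F$-module structure. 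The finiteness of $\Hom_T(T/\fm,N)$ comes from the Frobenius action: an element killed by $\fm$ generates, under the Frobenius, a finitely generated submodule whose $\fm$-torsion is forced to be finite-dimensional.

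In characteristic zero, I would instead use Lyubeznik's $D$-module argument. The ring $T$ carries its ring of $k$-linear differential operators $D(T,k)$, and $H^j_J(T)$ is known to be a holonomic $D$-module (it is built by iterating localizations and quotients, operations that preserve holonomicity). Bernstein's theory then provides that holonomic $D$-modules have finite length in the category of $D$-modules, and more importantly for us, that $\Ext^i_T(T/\fm,N)$ is finite-dimensional over $k$ whenever $N$ is holonomic. This last point is the crux: it follows from the fact that $T/\fm$ is itself a simple holonomic $D$-module, combined with the standard dimension inequalities for $\Ext$ in the holonomic category.

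The main obstacle in both cases is the same in spirit: one must upgrade the bare structure of $H^j_I(S)$ as an $S$-module (where it is typically very far from finitely generated) to the richer structure of an $F$-module or a holonomic $D$-module, and then transfer the finiteness of $\Ext$ to this richer category. Once that upgrade is in place, the rest of the argument is bookkeeping with injective resolutions and long exact sequences. I would expect the characteristic-$p$ reduction to $F$-finite $F$-modules (and the verification that the category is stable under taking the cokernel of the inclusion of the $\fm$-torsion) to be the most delicate technical step, while the characteristic-$0$ side is cleaner once one cites the holonomicity of local cohomology.
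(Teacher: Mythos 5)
The paper does not prove this theorem; it is stated as background with citations to Huneke--Sharp and to Lyubeznik, so there is no in-paper proof to compare against. Your two-case sketch is the right strategy at the top level: Frobenius/$F$-module structure in characteristic $p$, $D$-module structure in characteristic $0$. The positive-characteristic outline is essentially the Huneke--Sharp argument, repackaged in Lyubeznik's later $F$-module language, and is fine as a roadmap.

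The characteristic-$0$ half has two genuine gaps. First, after localizing at $\fp$ you must also pass to the $\m$-adic completion: a general regular local ring containing a field of characteristic $0$ is not a ring over which holonomic $D$-module theory applies directly, so $T$ must be replaced by $\widehat{T}\cong k[[x_1,\ldots,x_n]]$ via Cohen's structure theorem. This is harmless (Bass numbers are unchanged by completion) but essential, and you omit it. Second, the asserted mechanism --- that ``$T/\m$ is a simple holonomic $D$-module, combined with the standard dimension inequalities for $\Ext$ in the holonomic category'' --- conflates $\Ext_T(T/\m,N)$ with $\Ext_D(T/\m,N)$. Dimension inequalities control the latter, but Bass numbers are computed by the former, and finiteness of $\Ext_D$ does not formally yield finiteness of $\Ext_T$. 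Lyubeznik's actual argument runs differently: one shows $\Gamma_\m(N)$ is a holonomic $D$-submodule whose only possible simple $D$-factor is $E(k)=H^n_\m(T)$, so $\mu^0(\m,N)\le \Length_{D}(N)<\infty$; one then climbs the minimal injective resolution of $N$, using the nontrivial fact that the injective hull of a holonomic $D$-module is again a $D$-module with holonomic cokernel, to bound the higher Bass numbers inductively. That step is the real content you dismiss as ``bookkeeping with injective resolutions,'' and it is where the proof lives in both characteristic cases.
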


\begin{definition}[{see \cite[Theorem-Definition $4.1$]{LyuDMod}}]
Let $(R,\m, K)$ be a local ring containing a field. By the Cohen Structure Theorems, there exists a surjective homomorphism 
$\pi:S \to \widehat{R}$, where
$\widehat{R}$ is the completion of $R$, and 
$S=K[[x_1,\ldots,x_n]]$ for some $n.$
If $I=\Ker(\pi)$, 
the \textit{Lyubeznik number of $R$ with respect to integers $i, j \geq 0$} is defined as
$$
\lambda_{i,j}(R)=\mu^i(\m,H_I^{n-j}(S))=\dim_K \Ext_S^i(K,H_I^{n-j}(S)),
$$ 
which is finite and only depends on the ring $R$ and the integers $i$ and $j$.
If $d=\dim(R),$ $\lambda_{d,d}(R)$ is called the 
\emph{highest Lyubeznik number of }$R$ (this is justified because $\lambda_{i,j}(R)=0$ if $i>d$ or $j>d$). 
We can arrange the Lyubeznik number in a matrix $(\lambda_{i,j}(R)).$ We say that $R$ has a {\it trivial Lyubeznik table} if $\lambda_{d,d}(R)=1$ and all the other vanish.
\end{definition}

\begin{definition}[\cite{HHgraph}]
Let $R$ be a local ring. The \emph{Hochster-Huneke graph $\Gamma_R$ of $R$} is defined as follows. Its vertices are the minimal prime ideals, $\fp,$ of $R$ such that 
$\dim(R)=\dim(R/\fp).$ Two different
vertices $P$ and $\mathfrak{\fq}$ are joined by an edge if and only if $\hbox{ht}_R(\fp+\fq)=1$. 
\end{definition}  

\begin{theorem}[{see \cite[Main Theorem]{W}}]\label{ThmHighestLyuNum}
Let $R$ be a $d$-dimensional local ring that contains a field. Then $\lambda_{d,d}(R)$ is equal to the number of connected components of the Hochster-Huneke graph of $B$, where $B=\widehat{R^{sh}}$ is the completion of the strict Henzelization of $R$.
\end{theorem}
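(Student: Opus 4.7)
The plan is to prove the result by reducing to the complete, strictly Henselian case and then performing a Mayer--Vietoris induction driven by the connected-component structure of $\Gamma_B$.

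The first step is to show that $\lambda_{d,d}$ is invariant under both completion and strict Henselization, justifying the passage from $R$ to $B$. Invariance under completion is standard: completion is faithfully flat, commutes with local cohomology, and preserves $\Ext^i_S(K,-)$ of modules whose support is the closed point. Invariance under strict Henselization uses that $R^{sh}/R$ is a filtered colimit of \'etale extensions, so local cohomology base-changes cleanly, and the Bass numbers at $\m$ are preserved when the residue field is replaced by its separable closure. These reductions let us assume $R = B = S/I$ with $S = K[[x_1,\ldots,x_n]]$ and $K$ separably closed.

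The second step is to set up the decomposition. Let $\fp_1,\ldots,\fp_m$ be the minimal primes of $B$ with $\dim B/\fp_i = d$, i.e., the vertices of $\Gamma_B$. Since all other minimal primes have dimension less than $d$, a Grothendieck-vanishing / Mayer--Vietoris argument shows $H^{n-d}_I(S) = H^{n-d}_{I'}(S)$ for $I' := \fp_1 \cap \cdots \cap \fp_m$. Partition $\{\fp_1,\ldots,\fp_m\}$ into the connected components of $\Gamma_B$, giving $I' = I_1 \cap \cdots \cap I_c$. For $a \neq b$, the defining property of $\Gamma_B$ forces $\Ht(\fp + \fq) \geq 2$ for every pair of top-dimensional primes coming from distinct components, hence $\dim S/(I_a + I_b) \leq d-2$. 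A Hartshorne--Lichtenbaum-type vanishing then gives $H^{n-d}_{I_a+I_b}(S) = H^{n-d+1}_{I_a+I_b}(S) = 0$, and iterated Mayer--Vietoris yields $H^{n-d}_{I'}(S) = \bigoplus_{a=1}^c H^{n-d}_{I_a}(S)$. It therefore suffices to show that each connected component contributes exactly $1$ to the highest Lyubeznik number.

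The third step --- the main obstacle --- is to prove that if the subgraph on $\{\fp_1,\ldots,\fp_{m_a}\}$ is connected, then $\dim_K \Ext^d_S(K, H^{n-d}_{I_a}(S)) = 1$. The base case $m_a = 1$ is local duality applied to the complete domain $S/\fp$. For the inductive step, pick $\fp_{m_a}$ attached by an edge to some vertex in the remaining subgraph, write $I_a = J \cap \fp_{m_a}$ with $J = \fp_1 \cap \cdots \cap \fp_{m_a-1}$, and use that $\dim S/(J+\fp_{m_a}) = d-1$. Applying $\Ext^\bullet_S(K,-)$ to the Mayer--Vietoris long exact sequence and invoking the inductive hypothesis (which furnishes a $1$ from each of $J$ and $\fp_{m_a}$), the task reduces to showing that the connecting map identifies these two $1$'s into a single $1$ in $\Ext^d_S(K, H^{n-d}_{I_a}(S))$. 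The delicate point is verifying that this connecting homomorphism is nonzero, which requires a structural analysis of the socle of $H^{n-d+1}_{J+\fp_{m_a}}(S)$ --- most naturally handled using Lyubeznik's theory of $F$- or $D$-modules, or an auxiliary spectral sequence converging to $\Ext^{\bullet}_S(K, S/(J+\fp_{m_a}))$ whose edge maps encode precisely this identification.
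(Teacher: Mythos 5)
This theorem is not proved in the paper; it is quoted from Zhang's Main Theorem in \cite{W}, so there is no in-paper proof to compare against.

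As a free-standing reconstruction, your proposal gets the broad shape right in steps 1 and 2: only the top-dimensional minimal primes contribute to $H^{n-d}_I(S)$, and the intersection $I'$ of those primes splits across the connected components of $\Gamma_B$ into a direct sum $H^{n-d}_{I'}(S)\cong\bigoplus_a H^{n-d}_{I_a}(S)$ by iterated Mayer--Vietoris (the vanishing $H^{n-d}_{I_a+I_b}(S)=H^{n-d+1}_{I_a+I_b}(S)=0$ is depth-sensitivity of local cohomology over a regular ring, not Hartshorne--Lichtenbaum, but it holds). The genuine gap is in step 3, and you flag it yourself: the claim that the connecting homomorphism in the Mayer--Vietoris sequence ``identifies the two $1$'s into a single $1$'' --- i.e.\ that a particular edge map is nonzero --- is the entire content of the theorem, and you offer no argument, only a pointer to ``$F$- or $D$-module theory or an auxiliary spectral sequence.'' Sketching the inductive scaffolding around the key lemma does not establish it. In addition, the base case $\lambda_{d,d}(S/\fp)=1$ for a complete local domain $S/\fp$ with separably closed residue field is not ``local duality'' alone: it relies on the second vanishing theorem (Peskine--Szpiro, Ogus, Huneke--Lyubeznik) and on Lyubeznik's structural result that $H^i_\m H^{n-j}_I(S)$ is a finite direct sum of copies of $E_S(K)$ counted by $\lambda_{i,j}$. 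Likewise, invariance of $\lambda_{d,d}$ under strict Henselization has to address the change of residue field built into the definition, not just note that étale maps behave well. Until the connecting-map argument and the base case are actually carried out, the proposal is an outline of where a proof would go, not a proof.
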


\begin{remark}\label{Rem HH Mon}
The set of associated primes of a squarefree monomial ideal depends only on the minimal monomial generators (cf. \cite[Proposition 1.2.2 and Corollary 1.3.10]{HerzogHibi}). Therefore, the Hochster-Huneke graph for a Stanley-Reisner ring is independent of the base field.
\end{remark}

\subsection{Monomial ideals and polarization}\label{Pre Monomials}
In Sections \ref{SecLocSR} and \ref{SecPol}, we study the behavior of Lyubeznik numbers of Stanley-Reisner rings. For more detail about local cohomology with support in monomial ideals, we refer to \cite{AGZ,AM-Proc,AMV,Mustata,Terai,YSq,YStr}. We also refer to \cite{Peeva,SaraPol,Y12} for details about polarization.

\begin{definition}
Let $S=K[x_1,\ldots,x_n].$
Let $M= \bigoplus_{\ba \in \bZ^n} M_{\ba}$ be finitely generated $\bZ^n$-graded $S$-module. Given $\beta\in \ZZ^n,$ we take the shifted module $M(-\alpha)$
by $\left(M(\beta)\right)_{\alpha}=M_{\alpha+\beta}.$
\end{definition}

We recall the category of squerefree modules.
\begin{definition}[\cite{YSq}] 
Let $S=K[x_1,\ldots,x_n].$
We say a finitely generated $\bN^n$-graded $S$-module $M= \bigoplus_{\ba \in \bN^n} M_{\ba}$ is {\it squarefree}, if
the multiplication maps $M_{\ba} \ni y \longmapsto x_iy \in M_{\ba +\be_i}$ is bijective for all
$\ba \in \bN^n$ and all $i \in \supp (\ba) := \{ i \mid \alpha_i \ne 0 \}$.
\end{definition}

Here we list some basic properties of these graded modules.

\begin{itemize}
\item For a monomial ideal $I$, it is a squarefree $S$-module, if and only if $S/I$ is a squarefree module, 
  if and only if  $I =\sqrt{I}$.
The free modules $S$ itself and the $\bZ^n$-graded canonical module $\omega_S=S(-{\mathbf 1})$ are squarefree.
Here ${\mathbf 1} = (1,1, \ldots, 1) \in \bN^n$.

\medskip

\item Let $\gmod S$ be the category of $\bZ^n$-graded finitely generated $S$-modules, and
$\Sq S$ its full subcategory consisting of squarefree modules. Then $\Sq S$ is an Abelian subcategory of $\gmod S$.
\end{itemize}

For each $\ba \in \bN^n$, E. Miller \cite{Mi00} introduced the notion of {\it positively $\ba$-determined} $S$-modules. 
Here we do not gives the precise definition, but just list some basic properties.
\begin{itemize}
\item A monomial ideal $I$ (equivalently $S/I$)  is positively $\ba$-determined 
if and only if $I$ is generated by monomials of the form $x^\bb$ for some $ \bb \preceq \ba$. 

\medskip
 
\item A positively ${\mathbf 1}$-determined $S$-module is nothing other than a squarefree $S$-module. 

\medskip

\item Let $\gmod_\ba S$ be the full subcategory of $\gmod S$ consisting of positively $\ba$-determined $S$-modules. 
Then $\gmod_\ba S$ is  an Abelian subcategory of $\gmod S$. 

\medskip

\item If $M \in \gmod_\ba S$, then $\Ext^i_S(M, S(-\ba)) \in \gmod_\ba S$.
\end{itemize}

We now fix  notation and relate squarefree monomial ideals with simplicial complexes. 
Set $[n]:=\{1, \ldots, n\}$ and $S=K[x_1, \ldots, x_n]$. 
For a simplicial complex $\Delta \subset 2^{[n]}$, consider a squarefree monomial ideal  $I_\Delta := (\prod_{i \in \gs} x_i \mid \gs \subset [n], \gs \not \in \Delta)$ of $S$.  
Any squarefree monomial ideal is of the form $I_\Delta$ for some $\Delta$, and  $K[\Delta]:= S/I_\Delta$ is the 
Stanley-Reisner ring of $\Delta$ over $K$. 
Let $\m=(x_1,\ldots,x_n).$
The third author \cite[Theorem $1.1$]{YStr} showed that the Lyubeznik numbers of $K[\Delta]_\m$ can be obtained from its  graded structure:
\begin{equation}\label{LyuNum Graded}
\lambda_{i,j}(K[\Delta]_{\m})=\dim_K[\Ext_S^{n-i}(\Ext^{n-j}_S(K[\Delta], \omega_S), \omega_S)]_0.
\end{equation}

For $\gs \subset [n]$, let ${\fp}_\gs := (x_i \mid i \not \in \gs )$ be a monomial prime ideal of $S$, and set 
$K[\gs]:= S/{\fp}_\gs \cong K[x_i \mid i \in \gs]$. We remark that $K[\gs]$ can be seen as a subring of $S$ in the natural way.   
Recall that the {\it link} of $\gs$ in a simplicial complex $\Delta$ is defined by  
$$\lk_\Delta \gs := \{ \tau \subset [n] \mid \tau \cap \gs = \emptyset, \tau \cup \gs \in \Delta\}.$$
It is a simplicial complex again, and we have 
$$K[\Delta]_{{\fp}_\gs} \cong L[\lk_\Delta \gs^\rc]_\fn,$$
where $L$ is an extended field of $K$, $\fn$ is the graded maximal ideal of  $L[\gs]=L[x_i \mid i \in \gs]$, 
and we set  $\gs^\rc := [n] \setminus \gs$. 
From the previous discussion and Equation \ref{LyuNum Graded}, we have 

\begin{eqnarray}\label{localizing S-R}
 \lambda_{i,j}(K[\Delta]_{{\fp}_\gs})  &=& \lambda_{i,j}( L[\lk_\Delta \gs^\rc]_\fn)\\
&=& \dim_L \Ext_{L[\gs]}^{\#\gs -i} (\Ext_{L[\gs]}^{\#\gs -j}( L[\lk_\Delta \gs^\rc], \omega_{L[\gs]}), \omega_{L[\gs]}) \nonumber \\
&=&\dim_K \Ext_{K[\gs]}^{\#\gs -i} (\Ext_{K[\gs]}^{\#\gs -j}( K[\lk_\Delta \gs^\rc], \omega_{K[\gs]}), \omega_{K[\gs]}). \nonumber 
\end{eqnarray}

To control the last term of \eqref{localizing S-R}, we have to step into the theory squarefree modules.  
We say $\alpha \in \NN^n$ is {\it squarefree} if $\alpha_i = 0,1$ for all $i$. 
When $\alpha$ is squarefree, we freely identify $\alpha$ with $\supp(\alpha)$. 
For example, if $M$ is a $\ZZ^n$-graded module and $\gs \subset [n]$, 
$M(\gs)$ denotes the shifted module of $M$ by the corresponding 0-1 vector.  
Similarly, $M_\gs$ is the homogeneous component of $M$ with the corresponding  degree. 
If $M$ is a $\ZZ^n$-graded $S$-module, through the ring homomorphism $K[\gs] \hookrightarrow S$, we can regard 
$$M|_\gs := \bigoplus_{\substack{\alpha \in \NN^n \\ \supp(\alpha) \subset \gs}} M_\gs$$
as a $\NN^{\# \gs}$-graded $K[\gs]$-module. 
For a squarefree $S$-module $M$, it is easy to see that $$\fl_\gs(M):= M(\sigma^\rc)|_\gs$$ is a squarefree $K[\gs]$-module with 
$$(\fl_\gs(M))_\tau = M_{\gs^\rc \cup \tau}$$ for each $\tau \subset \gs$.    
Hence we have an exact functor $\fl_\gs: \Sq S \to \Sq K[\gs]$. Straightforward computation shown that 
$$\fl_\gs(K[\Delta]) \cong K[\lk_\Delta \gs^\rc]=K[\gs]/I_{\lk_\Delta \gs^\rc}.$$
In particular, for $\tau \subset [n]$, we have  
$$\fl_\gs(K[\tau]) \cong \begin{cases}
K[\tau \cap \gs] & \text{if $\tau \supset \gs^\rc$,}\\
0 & \text{otherwise.}
\end{cases}
$$
 
\begin{definition}
Let $g=x_1 ^{\alpha_1}\cdots x_n^{\alpha_n}$ be a monomial in $S=K[x_1,\dots,x_n]$. We define another monomial $g'$ in the polynomial ring $\widetilde{S}=K[x_{i,j}]$ where 
$$g'= \prod_{1 \le i \le n} x_{i,1} x_{i,2} \cdots
x_{i, \alpha_i}$$
If $I=(g_1,\ldots,g_k)$ is an ideal in $S$, the ideal $\widetilde{I}=(g_1 ', \ldots ,g_k ')$ in $\widetilde{S}$ is called the {\it polarization} of $I$.
\end{definition}

Polarization is a classical technique constructing  
a squarefree monomial ideal $\wI \subset \wS$ from a general  monomial ideal $I \subset S$.
There are relations between the singularity of a monomial ideal and its polarization. For instance, the Betti numbers of $S/I$ over $S$ and $\widetilde{S}/\widetilde{I}$ over $\widetilde{S}$ are same (cf. \cite[Theorem $21.10$]{Peeva}). As a consequence, $S/I$ is Cohen-Macaulay if and only if $\widetilde{S}/\widetilde{I}$ is Cohen-Macaulay.

In \cite{Y12}, the third author extended the polarization operation $I \mapsto \wI$ to the functor 
$\pol : \gmod_\ba S \to \Sq \wS$. Here $\pol(I) =\wI$ and $\pol(S/I) = \wS/\wI$. 
Moreover,  $$\Theta := \{ x_{i,1}-x_{i,j} \mid 1 \leq i \leq n, \, 2 \leq j \leq a_i \, \} \subset \wR$$ forms a $\pol(M)$-regular sequence, and gives an isomorphism 
$\pol(M) \otimes_{\wS} \wS/(\Theta) \cong M$.\footnote{An essentially same functor had been introduced by 
 Sbarra \cite{Sb09}, but we use the convention of \cite{Y12} here.}   
Here we just remark that $[\pol(M)]_\zero \cong M_\zero$. 
The ``reversed copy'' $\dpol$ of $\pol$ is also a polarization functor $\gmod_\ba S 
\to \Sq \wS$, but use the convention that $\dpol(x^\bb)=\prod 
x_{i,\alpha_i} x_{i,\alpha_i-1} \cdots x_{i, \alpha_i-\beta_i+1} \in \wS$. 


We now present a well-known relation between polarization and localization, which will play a major role in the proof of 
Proposition \ref{Prop Pol Gral LyuNum}.

\begin{remark}\label{RelLocPol}
Suppose that $I$ is a monomial ideal in a polynomial ring 
$S = K[x_1,\ldots,x_n]$, and  $\widetilde{I}$ is its polarization in $\widetilde{S} =K[x_{i,j}]$. 
We consider an inclusion $\iota:S\to \widetilde{S}$ given by $x_i \mapsto x_{i,1}$.
Let ${\fp}=(x_{i,1},\ldots, x_{n,1})$ be the prime ideal generated by the image of the variables $x_i$. We have that
$$
\left( \frac{\widetilde{S}}{\widetilde{I}}\right)_{\fp}= 
\left( \frac{\widetilde{S}}{\iota\left(\sqrt{I}\right) \widetilde{S}} \right)_{\fp}.
$$
Moreover, if $L=\hbox{Frac}\left(K[x_{i,j}]_{j>1}\right),$ and $T=L[[x_1,\ldots,x_n]]$
$$
\widehat{\left( \widetilde{S}/\widetilde{I}\right)_{\fp}}\cong
 \frac{T}{\sqrt{I} T} .
$$
\end{remark}

\subsection{$F$-modules}
We recall some definitions and  properties of the theory of $F$-modules introduced by Lyubeznik \cite{LyuFmod}.

A morphism of rings $\varphi:R\to S$ gives a change of base functor, $S\otimes_{R} M,$ from the category of  $R$-modules to $S$-modules.
Suppose that $S$ is a regular ring of prime characteristic $p>0.$
We are interested in the case when $S=R$ and $\varphi$ is the Frobenius morphism, and the functor is denoted by $\cF M$ \cite{P-S}.
For example, if $M$ is the cokernel of a matrix $(a_{i,j}),$ then $\cF M$ is the cokernel of $(a^p_{i,j}).$ 

\begin{definition}[{see \cite[Definition $1.1$]{LyuFmod}}]
Let $S$ be a regular ring of prime characteristic $p>0.$
An $S$-module, $\cM$, is an {\it $F$-module} if there exists an isomorphism of $S$-modules 
$\nu :\cM\to F \cM.$
\end{definition}

If $M$ is an $S$-module and $\alpha :M\to \cF M$ is a morphism of 
$S$-modules, we  consider
$$
\cM=\lim\limits_\to (M\FDer{\alpha} \cF M\FDer{\cF\alpha} \cF^{2} M\FDer{\cF^2 \alpha} \ldots).
$$
We have that $\cM$ is an {\it $F$-module} with  $\cM\FDer{\alpha}\cF\cM$ as structure isomorphism,
if $M$ is a finitely generated $S$-module, we say that $\cM$ is an \emph{$F$-finite $F$-module} with generator $\alpha :M\to \cF M$.
\begin{remark}
Let $S$ be a regular ring of prime characteristic $p>0.$
Let $I$ and $J$ be ideals of $S.$
Then $H^i_I(S)$ and $H^i_IH^j_J(S)$ are $F$-finite $F$-modules.
\end{remark}

\begin{theorem}[{see \cite[Theorem $2.11$]{LyuFmod}}]
Let $S$ be a regular ring of prime characteristic.
Every $F$-finite $F$-module, $M$, has finite Bass numbers. That is, for every prime ideal ${\fp}\subset S$,
$\dim\Ext^{i}_{S_{\fp}}(S_{\fp} /{\fp}S_{\fp},M_{\fp})$ is finite.
\end{theorem}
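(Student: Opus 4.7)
The plan is to follow the outline of Lyubeznik's argument in \cite{LyuFmod}. First I would reduce to the local case. Since Frobenius commutes with localization, if $\alpha: M \to \cF M$ is a generator of $\cM$ then the localized map $\alpha_\fp: M_\fp \to (\cF M)_\fp \cong \cF(M_\fp)$ realizes $\cM_\fp$ as an $F$-finite $F$-module over $S_\fp$. Thus it suffices to prove, for a regular local ring $(S,\fp,K)$ of prime characteristic $p$ and an $F$-finite $F$-module $\cM$ with generator $\alpha: M \to \cF M$, that $\dim_K \Ext^i_S(K, \cM) < \infty$ for every $i \geq 0$.

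Next I would exploit the directed-colimit presentation
\begin{equation*}
\cM \;=\; \varinjlim_e \cF^e M.
\end{equation*}
Because $S$ is regular local, $K = S/\fp$ admits a finite free resolution by finitely generated free $S$-modules (a Koszul complex on a regular system of parameters), so $\Ext^i_S(K, -)$ commutes with filtered colimits. Hence
\begin{equation*}
\Ext^i_S(K, \cM) \;=\; \varinjlim_e \Ext^i_S(K, \cF^e M).
\end{equation*}
Each $\cF^e M$ is a finitely generated $S$-module, and since $S$ has finite Krull dimension and $K$ has finite projective dimension, each term $\Ext^i_S(K, \cF^e M)$ is a finite-dimensional $K$-vector space. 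The problem is reduced to bounding the dimensions of this directed system uniformly in~$e$.

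For this step I would use that Frobenius on $S$ is flat by Kunz's theorem. Flatness gives a natural identification $\cF \Ext^i_S(K, N) \cong \Ext^i_S(\cF K, \cF N)$ for finitely generated $N$. Combined with a finite filtration of $\cF K$ by $S$-submodules whose successive quotients are copies of $K$ (coming from the fact that $\cF K$ has finite length and is supported only at $\fp$), this lets me compare $\Ext^i_S(K, \cF^{e+1} M)$ with a Frobenius twist of $\Ext^i_S(K, \cF^e M)$ up to a uniformly bounded multiplicative factor. Applied iteratively, this identifies the transition maps in the direct system, up to such bounded factors, with iterates of a Frobenius-like operator on the finite-dimensional vector space $\Ext^i_S(K, M)$, so their images stabilize.

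The main obstacle is making the previous paragraph rigorous: one must carefully control how the $F$-structure on $\cM$ interacts with the Koszul resolution of $K$ and with the Matlis duals, and show that the ``Frobenius twist'' does not inflate dimensions without bound when one passes to the colimit. This bookkeeping — essentially showing that the direct system of finite-dimensional $K$-vector spaces has eventually constant image dimension — is the technical heart of the argument, and is where the hypotheses of $F$-finiteness (giving a finitely generated generator $M$) and regularity of $S$ (giving flatness of Frobenius and finite projective dimension of $K$) are both indispensable.
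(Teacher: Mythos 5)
You have the correct skeleton: reduce to the local case, write $\cM=\varinjlim_e \cF^e M$ with a finitely generated generator $M$, and use that $\Ext^i_S(K,-)$ commutes with filtered colimits because $K$ has a finite free (Koszul) resolution. All of this matches the strategy underlying \cite[Remark~2.13]{LyuFmod}, which the paper records as Lemma~\ref{LemmaBass} and then applies in the proof of Theorem~\ref{Main Localization}; the paper itself does not re-prove Theorem~2.11 but cites it and isolates exactly this bound $\mu^i(\fp,\cM)\leq\mu^i(\fp,M)$.

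However, the step you flag as ``the technical heart'' is genuinely missing, and the route you sketch for it does not work. You propose to compare $\Ext^i_S(K,\cF^{e+1}M)$ with $\cF\Ext^i_S(K,\cF^e M)$ via the isomorphism $\cF\Ext^i_S(K,N)\cong\Ext^i_S(\cF K,\cF N)$ and a filtration of $\cF K=S/\m^{[p]}$ by copies of $K$. But $\cF$ applied to a $K$-vector space $V$ is $(S/\m^{[p]})\otimes_K V$, whose $K$-dimension is $p^n\dim_K V$ (here $n=\dim S$); the filtration of $\cF K$ also has $p^n$ steps. So the ``Frobenius twist'' inflates dimensions by the factor $p^n$ at every stage, and ``up to a uniformly bounded multiplicative factor'' is precisely the statement you cannot get from this comparison. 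There is no reason for the images in a direct system of vector spaces with geometrically growing dimensions to stabilize, so as written the argument does not close.

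The observation that actually makes the argument work, and which your proposal never invokes, is elementary: because $\m^{[p]}\subset\m$, the Frobenius functor $\cF$ carries a \emph{minimal} free resolution $G_\bullet$ of $M$ to a \emph{minimal} free resolution $\cF G_\bullet$ of $\cF M$ (flatness of $\cF$ gives exactness; the entries of the differentials get raised to $p$-th powers and hence stay in $\m$). Therefore the Betti numbers, and by Remark~\ref{RemDuality} the Bass numbers, of $\cF^e M$ at $\fp$ are \emph{constant} in $e$:
\begin{equation*}
\mu^i(\fp,\cF^e M)=\mu^i(\fp,M)\quad\text{for all }e\geq 0.
\end{equation*}
Consequently $\Ext^i_{S}(K,\cM)=\varinjlim_e\Ext^i_S(K,\cF^e M)$ is a filtered colimit of $K$-vector spaces all of dimension $\mu^i(\fp,M)$, so its dimension is at most $\mu^i(\fp,M)<\infty$. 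This is exactly the content of Lemma~\ref{LemmaBass}, and it replaces, rather than refines, the filtration-of-$\cF K$ comparison you proposed. You should redo the key step along these lines; the rest of your outline then goes through.
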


\subsection{$D$-modules} 
In this section, we recall some relevant facts about $D$-modules.  We refer the interested reader to \cite{Bj1,Bj2,Cou}.

\begin{definition}
Suppose that $S$ is either $K[x_1,\ldots,x_n]$ or $K[[x_1,\ldots,x_n]]$.  We define the rings of $K$-linear differential operators of $S$ by 
$$D_K(S)=S\left\langle\frac{1}{t!} \frac{\partial}{\partial x_i} \ | \ t \in \NN, 1 \leq i \leq n \right\rangle\subseteq \Hom_K (S,S).$$ 
\end{definition}

We have that  $S_f$ is an $D_K(S)$-module of finite length  for every $f \in S$. As a consequence, the local cohomology modules $H^j_I(S)$ and $H^i_\m H^j_I(S)$ have finite length as $D_K(S)$-modules \cite[Corollary 6]{Lyu2}.

\begin{definition}[{\cite[Definition $4.3$]{NuWi1}}]\label{DefGenLyuNum}
Let $(R, \m, K)$ be a local ring containing a field  and let $\widehat{R}$ be its completion at $\m$.   
For $L$ a coefficient field of $\widehat{R}$, by Cohen-Macaulay Structure Theorems there exists a surjection $\pi: S \to \widehat{R}$, where $S = K[[x_1, \ldots, x_n]]$ for some $n \geq 1$, and such that $\pi(K) = L$.
The \emph{$i$-th generalized Lyubeznik number of $R$ with respect to $L$} is defined as  
 $$\lambda^{i}_{0}(R;L):=\Length_{D_K(S)}  H^{n-i}_{I}(S).$$
This number is finite and depends only on $R$ and $L$.
\end{definition}

In the previous definition, we considered very specific generalized 
Lyubeznik numbers. The definition is more general and includes the original Lyubeznik numbers.

\begin{remark}
If in the definition above, $R$ is a localization at the homogeneous maximal ideal of an Stanley-Reisner ring, we have  the generalized Lyubeznik numbers could depend on the characteristic of the coefficient field, but not on the choice of a specific field. This is because one can relate the length of $H^{n-i}_I(S)$ with length as a straight module (cf. \cite[Proposition $2.10$ and Remark $2.12$]{YStr} and \cite[Theorem $6.6$]{NuWi1}). 
\end{remark}

\begin{remark}\label{RemGamma}
If $S=K[[x_1,\ldots,x_n]]$ is a power series and $I$ is a monomial ideal, then there exists a filtration 
$$
0=M_1\subset M_2\subset\ldots\subset M_\ell=H^i_I(S)
$$
such that $M_{j+1}/M_j\cong H^{|\alpha|}_{\fp_\alpha}(S),$
where $\alpha=(\alpha_1,\ldots,\alpha_n)\in \{ 0,1\}^n$, $
| \alpha | =
\alpha_1+\ldots+\alpha_n,$ and $\fp_\alpha=(x_{\alpha_k}|\alpha_k\neq 0).$
Since $H^{|\alpha|}_{\fp_\alpha}(S)$ is a simple $D_K(S)$-modules, we have that 
$\Length_{D_K(S)} H^i_I(S)=\ell.$
\end{remark}

\begin{definition}[{see \cite[Section 2]{AGZ} \cite[Section $3$]{AM-Proc}}]\label{AMProc}
Under the notation of Remark \ref{RemGamma}, 
we take the multiplicities
$$
m_{i,-\alpha}=\#\{j|M_{j+1}/M_j\cong H^{|\alpha|}_{\fp_\alpha}(S)\}.
$$

We define the numbers
$$\gamma_{i,j}=\sum_{|\alpha|=n-i}m_{j,-\alpha},$$
which are invariants depending only on $R=S/I$.
\end{definition}

\begin{remark}
Under the notation in Remark \ref{RemGamma} and assuming that $I$ is squarefree,
we note that in \cite[Proposition 2.1]{AGZ} it is shown that 
$$
m_{i,\alpha}=\dim_K\left(H^i_{I}(S)\right)_{\alpha},
$$
where $\left(H^i_I(S)\right)_\alpha$ denotes the $K$-vector space of degree $\alpha$
in the $\ZZ^n$-graded module $H^i_I(S).$ 
\end{remark}

\begin{remark}\label{RemMultBetti}
Under the notation in Remark \ref{RemGamma} and assuming that $I$ is squarefree,
we note that in \cite[Corollary 2.2]{AGZ} it is shown that 
$$
m_{i-|\alpha|,\alpha}=\beta_{i,-\alpha}(I^\vee)
$$
where $\beta_{i,-\alpha}(I^\vee)$ denotes the $\alpha$-Betti number of the  $\ZZ^n$-graded minimal free resolution of the Alexander dual of $I$   (see also \cite[Theorem 3.3]{Mustata}).
As a consequence,
$$
\gamma_{i,j}=\sum_{|\alpha|=n-i}m_{j,\alpha}=
\sum_{|\alpha|=n-i} \beta_{j+|\alpha|,-\alpha}(I^\vee),
$$
which is the $(i+n-i)$-Betti number of $I^\vee.$
In addition,
$$
\lambda^0_{i}(S/I)=\sum_{\alpha} m_{n-i,\alpha}=\sum_{\alpha}
\beta_{n-i+|\alpha|,-\alpha}(I^\vee),
$$
which is the total Betti number of the $(n-i)$-linear strand of the free resolution of $I^\vee.$
\end{remark}

\section{A global bound  for Lyubeznik numbers}
In this section we prove first main theorem using $F$-modules 
theory.
\begin{remark}\label{RemDuality}
Suppose that $(S,\m,K)$ is a regular local ring.
Let $n=\dim(S).$
Let $M$ be a finitely generated $S$-module.
Let $\underline{x}=x_1,\ldots,x_n\in \m$ be a regular system of parameters, and
let $\cK(\underline{x},-)$ denote the associated Koszul complex.
We have that
$$
\Tor^S_i(K,M)\cong H_i(\cK(\underline{x},M))\cong H^{n-i}( \cK(\underline{x},M))=\Ext^{n-i}_S(K,M).
$$
Then,
$$
\dim_K\Tor^S_i(K,M)=\dim_K\Ext^{n-i}_S(K,M).
$$
\end{remark}
Let $S$ denote a regular Noetherian ring, and $M$ be a finitely generated $S$ module.
Let ${\fp}$ be a prime ideal of $S$ and $d_{\fp}$ denote $\dim(S_{\fp}).$
Let $\beta_i({\fp},M)=\dim_{S_{\fp}/{\fp} S_{\fp}}\Tor^{S_{\fp}}_i(S_{\fp}/{\fp} S_{\fp},M_{\fp}).$
Remark \ref{RemDuality} has two implications.
First, there exists a bound, $B,$ given by the maximum rank of the modules appearing in a free resolution for $M$ such that
\begin{equation}\label{BoundFG}
\mu^i({\fp},M)=\beta_{d_{\fp}-i}(S_{\fp},M_{\fp})\leq B.
\end{equation}
Second, if ${\fp}'$ is another prime ideal contained in ${\fp}$, we have that 
\begin{equation}\label{DecreaseFG}
\mu^i({\fp}',M)=\beta_{d_{{\fp}'}-i}({\fp}',M)\leq \beta_{d_{{\fp}'}-i}({\fp},M)=
\mu^{d_{\fp}-d_{{\fp}'}+i}({\fp},M).
\end{equation}

We need to recall a basic property of $F$-finite $F$-modules.

\begin{lemma}[{see \cite[Remark $2.13$]{LyuFmod}}]
\label{LemmaBass}
Suppose that $(S,\m,K)$ is a regular local ring of positive characteristic.
Let $\cM$ be an $F$-module with a generator
 $M\FDer{\alpha}\cF M.$ Then,
$$
\mu^i(K,\cM)\leq \mu^i(K,M).
$$
\end{lemma}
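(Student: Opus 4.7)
The plan is to exploit the presentation of an $F$-finite $F$-module as a filtered colimit, combined with two ingredients: that the Frobenius functor $\cF$ cannot increase Betti numbers of finitely generated $S$-modules (via Kunz flatness), and that $\Ext^{i}_{S}(K,-)$ commutes with filtered colimits in this setting. Writing $\cM = \varinjlim_{e}\cF^{e}M$ with transition maps $\cF^{e}\alpha$, it suffices to bound each $\mu^{i}(K,\cF^{e}M)$ uniformly by $\mu^{i}(K,M)$ and then pass to the colimit.

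For the Betti-number comparison, I would argue as follows. Since $S$ is regular of positive characteristic, the Frobenius on $S$ is flat, so $\cF$ is an exact functor carrying $S^{r}$ to $S^{r}$. Hence if $G_{\bullet}\twoheadrightarrow M$ is the minimal free resolution with $G_{j}=S^{\beta_{j}(M)}$, then $\cF G_{\bullet}\twoheadrightarrow \cF M$ is again a (generally non-minimal) free resolution whose $j$-th term has rank $\beta_{j}(M)$. It follows that $\beta_{j}(\cF M)\leq \beta_{j}(M)$, and iterating gives $\beta_{j}(\cF^{e}M)\leq \beta_{j}(M)$ for all $e\geq 0$. Invoking Remark \ref{RemDuality} with $n=\dim S$ to convert Bass into Betti numbers, one obtains
$$\mu^{i}(K,\cF^{e}M)=\dim_{K}\Tor^{S}_{n-i}(K,\cF^{e}M)=\beta_{n-i}(\cF^{e}M)\leq \beta_{n-i}(M)=\mu^{i}(K,M).$$

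To conclude, I would use that $K=S/\m$ admits the Koszul complex on a regular system of parameters as a finite resolution by finitely generated free $S$-modules. Since each $\Hom_{S}(S^{r},-)$ commutes with filtered colimits and cohomology of a bounded complex does too, $\Ext^{i}_{S}(K,-)$ commutes with the direct system, yielding
$$\Ext^{i}_{S}(K,\cM)\;\cong\;\varinjlim_{e}\Ext^{i}_{S}(K,\cF^{e}M).$$
A filtered colimit of finite-dimensional $K$-vector spaces has dimension at most the supremum of the dimensions of its terms (each term maps onto its image in the colimit, and these images form an ascending filtration whose union is everything). Hence $\mu^{i}(K,\cM)\leq \sup_{e}\mu^{i}(K,\cF^{e}M)\leq \mu^{i}(K,M)$.

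No step is genuinely difficult; the real content is the flatness of Frobenius (Kunz), which is precisely what forces Betti numbers to be monotone under $\cF$. The only mild subtlety is the last observation about dimensions of filtered colimits, which is essential because there is no a priori reason for the transition maps $\Ext^{i}_{S}(K,\cF^{e}\alpha)$ to be injective.
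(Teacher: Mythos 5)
Your proof is correct and complete. The paper itself gives no argument for this lemma; it simply cites Lyubeznik's Remark~2.13 in \cite{LyuFmod}, and your proof recovers exactly the expected line of reasoning: express $\cM$ as the filtered colimit $\varinjlim_e \cF^e M$, bound $\mu^i(K,\cF^e M)$ uniformly in $e$ via Kunz flatness, and pass to the colimit using that $\Ext^i_S(K,-)$ commutes with filtered colimits (because $K$ has a finite free resolution) together with the elementary fact that a filtered colimit of $K$-vector spaces of dimension $\le d$ again has dimension $\le d$. You were right to flag that the transition maps $\Ext^i_S(K,\cF^e\alpha)$ need not be injective; that last observation is the one place where carelessness could slip in. One small sharpening you could have made: since the entries of the differentials in a minimal free resolution of $M$ lie in $\m$, applying $\cF$ raises them to $p$-th powers, which still lie in $\m$, so $\cF G_\bullet$ is again \emph{minimal} and in fact $\beta_j(\cF^e M)=\beta_j(M)$ with equality; but the inequality you proved is all the lemma needs.
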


\begin{theorem}\label{Main Localization}
Let $S$ be a regular Noetherian ring  of prime characteristic $p$ and dimension $n.$ Let $\cM$ be an $F$-finite $F$-module.
Then, there exists an integer $B$ such that
$$
\mu^i({\fp},\cM)\leq B
$$ 
for every ${\fp}\in\Spec(S),$ and  $i\in\NN$.
In particular, for every ideal $I\subset S,$ the Lyubeznik numbers of $R=S/I$, $\lambda_{i,j}(R_{\fp})=\mu^i({\fp},H^{n-j}_I(S)),$ are bounded.
\end{theorem}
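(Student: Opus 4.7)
The plan is to exploit the hypothesis that $\cM$ is $F$-finite: there is a finitely generated $S$-module $M$ and a map $\alpha:M\to\cF M$ with $\cM$ the direct limit of $M\to\cF M\to\cF^2 M\to\cdots$. Bass numbers of the limit are controlled by those of the generator $M$ through Lemma \ref{LemmaBass}, while Bass numbers of a finitely generated module over a regular Noetherian ring of finite dimension are bounded by the ranks in a finite projective resolution, uniformly in the prime.

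Concretely, I would fix a resolution $P_\bullet\to M\to 0$ by finitely generated projective $S$-modules of length at most $n$; such a resolution exists because a regular Noetherian ring of Krull dimension $n$ has global dimension $n$. Define $B$ to be the maximum rank, taken over all primes of $S$ and over $k=0,\ldots,r$, of the finitely many $P_k$; this is finite because each $P_k$ is finitely generated, so its rank at any prime is bounded by its minimal number of generators.

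Now fix $\fp\in\Spec(S)$. Localization is flat, so $(P_\bullet)_\fp\to M_\fp\to 0$ is a free resolution of length at most $n$ whose terms have rank at most $B$. Moreover, since $F^{-1}(\fp)=\fp$, the Peskine--Szpiro functor commutes with localization at $\fp$, so $\alpha_\fp:M_\fp\to\cF(M_\fp)$ generates the $F$-finite $F$-module $\cM_\fp$ over the regular local ring $S_\fp$. Applying Lemma \ref{LemmaBass} inside $S_\fp$ yields $\mu^i(\fp,\cM)\leq \mu^i(\fp,M)$, and by Remark \ref{RemDuality} the right-hand side equals $\dim_{S_\fp/\fp S_\fp}\Tor^{S_\fp}_{d_\fp-i}(S_\fp/\fp S_\fp,M_\fp)$, which is at most the rank of the $(d_\fp-i)$-th term of the localized resolution, hence at most $B$. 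For the Lyubeznik-number corollary, each $H^{n-j}_I(S)$ is an $F$-finite $F$-module, localization commutes with local cohomology so $\lambda_{i,j}(R_\fp)=\mu^i(\fp,H^{n-j}_I(S))$, and the $H^{n-j}_I(S)$ that can contribute are nonzero only for finitely many $j$. Taking the maximum of the bounds produced by the main argument, applied to each of this finite family of $F$-finite modules, yields a single global bound.

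The proof is not especially hard once the right machinery is in place; the main obstacle is more conceptual than technical, namely producing \emph{one} finite integer $B$ that works uniformly across all primes. This uniformity rests on the two classical facts that (i) the Peskine--Szpiro functor commutes with localization, so the $F$-finite structure on $\cM$ descends to every $\cM_\fp$ with a compatible generator, and (ii) a regular Noetherian ring of finite Krull dimension has finite global dimension and hence admits finite projective resolutions whose terms are finitely generated, yielding ranks bounded independently of the point of $\Spec(S)$.
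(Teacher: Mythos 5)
Your proposal is correct and follows essentially the same route as the paper: pick a generator $M\to\cF M$ for the $F$-finite module, take a finite (projective) resolution of $M$, set $B$ to be the maximal rank of its terms, then combine Lemma \ref{LemmaBass} (to pass from $\cM$ to $M$) with Remark \ref{RemDuality} (to convert Bass numbers to Betti numbers and bound them by ranks of the localized resolution). The only cosmetic difference is that you use a finite length projective resolution from the global-dimension bound and take ranks over all primes, while the paper works with a (possibly infinite) free resolution and only needs its first $n$ ranks; both lead to the same uniform $B$.
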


\begin{proof}
Let $n=\dim S.$
Let $M\to \cF M$ be a generator for $\cM$ such that $M$ is a finitely generated $R$-module.
There exists  a free resolution for $M$
$$
G_{\bullet}:\ldots\to G_{i}\to \ldots \to G_0\to M\to 0
$$
such that
$b_i=\rk(G_i)$ is finite.
Let 
$$B=\Max\{b_i\mid i=1,\ldots, n\}.$$
By the characterization of Betti numbers as rank of the terms 
in a minimal free resolution, we have that
$$\dim_{S_{\fp}/{\fp} S_{\fp}}(\Tor^{S_{\fp}}_i(S_{\fp}/\fp S_{\fp},M_{\fp}))\leq b_i$$
for every $i\in\NN,$ and prime ideal ${\fp}\subset S.$
We have that 
$M_{\fp}\to \cF_{S_{\fp}} M_{\fp}\cong (\cF M)_{\fp}$ is a root for the $F_{S_{\fp}}$-module $\cM_{\fp}$ 
for every prime ideal ${\fp}\subset S,$
\cite[Definition-Proposition $1.3$]{LyuFmod}.
Then,
\begin{align*}
\mu^i({\fp},\cM) &= \mu^i({\fp},M)\hbox{ by Lemma \ref{LemmaBass}.}\\
&=\dim_{S_{\fp}/{\fp}S_{\fp}}\Ext^i_{S_{\fp}}(S_{\fp}/{\fp} S_{\fp},M_{\fp})\\
&=\dim_{S_{\fp}/{\fp}S_{\fp}}\Tor^{S_{\fp}}_{\dim S_{\fp} -i}(S_{\fp}/{\fp}S_{\fp},M_{\fp})\hbox{ by Remark \ref{RemDuality}.}\\
& \leq \rk(G_{\dim S_{\fp}-i})\hbox{ because }G_\bullet\otimes S_{\fp}\hbox{ is a resolution for }M_{\fp}.\\
&= b_{\dim S_{\fp}-i}\\
&\leq B
\end{align*}

Hence, $\mu^i({\fp},\cM) \leq B,$
which proves the claim for $F$-finite $F$-modules.
The claim about Lyubeznik numbers and local cohomology follows from the fact that $H^{n-i}_I(S)$ is always an $F$-finite $F$-module.
\end{proof}

\begin{remark}
Under the notation of Theorem \ref{Main Localization}, we can take 
\begin{equation}\label{Bound}
B=\Max\{\dim_{S/\m}\Tor^S_i(S/\m,\Ext^j_S(S/I,S)) | \m\hbox{ is a maximal ideal of }S\hbox{ and }i,j\in\NN\}
\end{equation}
as a bound of the Lyubeznik numbers of $S/I.$
Then, one can bound the Lyubeznik numbers of $S/I$ in terms of Betti numbers of its degeneracy modules, $\Ext^j_S(S/I,S).$ 
Puthenpurakal's bound for polynomial rings of characteristic zero is given by the multiplicity of  $H^i_I(S)$ as holonomic $D_K(S)$-module \cite{TonyInjRes}. The bound presented in Theorem \ref{Main Localization} could be sharper.
For example, if $f$ is a polynomial, we have that \ref{Bound}
gives  $B=1$ as a bound, which is sharp. The bound presented in \cite{TonyInjRes} is bigger than or equal to
$(\deg(f))^n.$ Unlike the bound presented in \cite{TonyInjRes}, $B$ is shown to be a bound for the localization at every prime ideal.  
\end{remark}

\section{Localization and Lyubeznik numbers of Stanley-Reisner rings}
\label{SecLocSR}
In this section, we study in deeper detail  the Lyubeznik numbers of an Stanley-Reisner ring. 
In particular, we discuss a lower bound of $B$ presented in Theorem \ref{Main Localization}, under the assumption that $R$ is the localization of a Stanley-Reisner ring at its graded maximal ideal (see Theorem \ref{Thm LyuNum Loc} for details).

Huneke and Sharp \cite{Huneke} and Lyubeznik \cite{LyuDMod} proved 
that the Bass numbers of local cohomology are finite.
Motivated by this result and Theorem \ref{Main Localization}, one could expect that the Lyubeznik numbers, which are Bass numbers of local cohomology modules, would also resemble the behavior of Bass numbers of a finitely generated modules under localization (see \ref{DecreaseFG}). This is not true, even for the highest Lyubeznik number. The following example shows that the highest Lyubeznik number can both decrease and increase under localization.
\begin{example}\label{ExampleBadB}
Let $S=K[x,y,z,u,v],$ and $I=(x,y)\cap (y,z)\cap (z,u)\cap (u,v).$
We have that $\dim R=4,$ and the Hochster-Huneke graph of $R$ is connected.
Then, we have that the highest Lyubeznik number, $\lambda_{4,4}(R),$ is 
$1.$

Let ${\fp}=(x,y,u,v).$
We have that the completion of $R_{\fp}$ is $L[[x,y,u,v]]/J,$ where $J=(x,y)\cap (u,v),$ where $L=R_{\fp}/{\fp}R_{\fp}.$
Then $\dim(R_{\fp})=3$ and the Hochster-Huneke of $R_{\fp}$ consists of two disconnected points.

Then, the highest Lyubeznik number of $R_{\fp}$,  $\lambda_{3,3}(R_{\fp})$, is $2.$ 
If $\fp=(x,y),$ we have that its highest Lyubeznik number of $R_\fp$ is $1$.
\end{example}
In Remark \ref{Grow with Loc}, we present a general technique to build rings whose localization can have very different Lyubeznik numbers.

\begin{remark}
\`Alvarez-Montaner and Vahidi have previously studied the behavior of Bass numbers of local cohomology over squarefree monomial ideals \cite[Section $5$]{AMV}. In particular, they studied the vanishing of these numbers. In addition, they presented examples that show different behavior of the Bass numbers of local cohomology modules and finitely generated modules.
\end{remark}

\begin{lemma}\label{fl and Ext}
Let $S=K[x_1,\ldots,x_n]$ be a polynomial ring over a field.
For a squarefree $S$-module $M$, we have 
$$\fl_\gs (\Ext_S^i(M, \omega_S)) \cong \Ext^i_{K[\gs]}(\fl_\gs(M), \omega_{K[\gs]}).$$
\end{lemma}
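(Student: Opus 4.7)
The plan is to reduce the claim to the case of squarefree free modules $S(-\tau)$ via a resolution argument, exploiting that $\fl_\gs$ is exact on $\Sq S$ and sends such modules to free $K[\gs]$-modules. Concretely, I would take a resolution $F^\bullet \to M \to 0$ in $\Sq S$ with $F^k = \bigoplus_j S(-\tau_{k,j})$ and $\tau_{k,j} \subset [n]$; this exists because $\Sq S$ is an abelian category with enough projectives of this shape (every squarefree module is generated in squarefree degrees). Since $\Hom_S(S(-\tau), \omega_S) \cong S(-({\mathbf 1} - \tau))$ is again squarefree free, the complex $\Hom_S(F^\bullet, \omega_S)$ lies in $\Sq S$ and its $i$-th cohomology computes $\Ext^i_S(M, \omega_S)$.

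The key base case is the identity on generators. Directly from the definition of $\fl_\gs$ one obtains $\fl_\gs(S(-\tau)) \cong K[\gs](-(\tau \cap \gs))$ for each $\tau \subset [n]$; specializing to $\tau = [n]$ gives $\fl_\gs(\omega_S) = \omega_{K[\gs]}$. Therefore both sides of the desired isomorphism, evaluated on $S(-\tau)$, return the same module $K[\gs](-({\mathbf 1}_\gs - (\tau \cap \gs)))$. I would then upgrade this to a natural isomorphism of functors on the category of squarefree free $S$-modules by checking that a map $S(-\tau) \to S(-\tau')$---multiplication by the squarefree monomial $x^{\tau-\tau'}$ when $\tau \geq \tau'$---is sent by both constructions to the same morphism between the corresponding $K[\gs]$-modules. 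Assembling this term-by-term gives an isomorphism of complexes
\[
\fl_\gs\bigl(\Hom_S(F^\bullet, \omega_S)\bigr) \;\cong\; \Hom_{K[\gs]}\bigl(\fl_\gs(F^\bullet), \omega_{K[\gs]}\bigr).
\]

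Taking $i$-th cohomology closes the argument. On the left, exactness of $\fl_\gs$ yields $\fl_\gs(\Ext^i_S(M, \omega_S))$. On the right, exactness of $\fl_\gs$ applied to $F^\bullet \to M$ shows that $\fl_\gs(F^\bullet) \to \fl_\gs(M)$ is a free resolution over $K[\gs]$ (each $\fl_\gs(F^k)$ is free by the base-case computation), so the cohomology is $\Ext^i_{K[\gs]}(\fl_\gs(M), \omega_{K[\gs]})$. The step I expect to require the most care is the naturality check in the middle paragraph: once verified on the generating morphisms between the $S(-\tau)$, the upgrade to an isomorphism of complexes and the subsequent cohomological bookkeeping are formal.
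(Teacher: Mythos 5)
Your proof is correct, but it follows a genuinely different route from the paper's. You resolve $M$ by the squarefree free modules $S(-\tau)$, push the resolution through $\fl_\gs$ and $\Hom(-,\omega)$, and match everything up term-by-term; the one step you flag as delicate --- checking that the natural isomorphism $\fl_\gs\Hom_S(-,\omega_S)\cong\Hom_{K[\gs]}(\fl_\gs(-),\omega_{K[\gs]})$ is compatible with the monomial maps $S(-\tau)\to S(-\tau')$ --- does indeed need to be verified, but it works out because both sides send mult by $x^{\tau\setminus\tau'}$ to mult by $x^{(\tau\setminus\tau')\cap\gs}$ with respect to the canonical generators of the cyclic modules involved. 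The paper instead invokes the canonical cochain complex $\bD_S(M)$ from \cite[\S3]{Yan04}, with $\bD_S^{-i}(M)=\bigoplus_{\#\tau=i}(M_\tau)^*\otimes_K K[\tau]$ and $H^{-i}(\bD_S(M))\cong\Ext_S^{n-i}(M,\omega_S)$, and shows directly that $\fl_\gs(\bD_S^\bullet(M))\cong\bD_{K[\gs]}^{\bullet+\#\gs^\rc}(\fl_\gs(M))$ by a straightforward decomposition of the direct sum using $\fl_\gs(K[\tau])\cong K[\tau\cap\gs]$ for $\tau\supset\gs^\rc$ and $0$ otherwise. The paper's route avoids both choosing a resolution and the scalar-chasing naturality check, since $\bD_S(M)$ is built functorially from $M$ and the compatibility with $\fl_\gs$ falls out of re-indexing the direct sum; your route is perhaps more familiar (standard homological bootstrapping from free modules) but shifts the burden onto the commutativity verification, and it additionally uses the fact that squarefree modules admit resolutions by squarefree free modules, which, while true (generation in squarefree degrees plus closure of $\Sq S$ under kernels), is an extra input the paper does not need.
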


\begin{proof}
In \cite[\S3]{Yan04}, the third author constructed the cochain complex 
$$\bD_S(M): 0 \longrightarrow  \bD^{-n}_S(M)  \longrightarrow \bD^{-n+1}_S(M)  \longrightarrow  \cdots  \longrightarrow \bD^0_S(M)  \longrightarrow 0$$
with 
$$\bD^{-i}_S(M) = \bigoplus_{\substack{\gs \subset [n] \\ \# \gs =i}}   (M_\gs)^* \otimes_K K[\gs]$$
such that $H^{-i}(\bD_S(M)) \cong \Ext_S^{n-i}(M, \omega_S)$.  
Here $(-)^*$ means the $K$-dual of a vector space. 

We have 
\begin{eqnarray*}
\fl_\gs(\bD_S^{-i}(M)) &\cong& \fl_\gs   \left( \bigoplus\nolimits_{\substack{\tau \subset [n] \\ \# \tau =i}} 
(M_\tau)^* \otimes_K K[\tau] \right)\\ 
&\cong& \bigoplus_{\substack{\tau \subset [n] \\ \# \tau =i}}   (M_\tau)^* \otimes_K \fl_\gs(K[\tau]) \\
&\cong& \bigoplus_{\substack{\gs^\rc \subset \tau \subset [n] \\ \# \tau =i}}   (M_\tau)^* \otimes_K K[\tau \cap \gs] \\
&\cong& \bigoplus_{\substack{\tau \subset \gs \\ \# \tau =i-\gs^\rc}}   (\fl_\gs(M)_\tau)^* \otimes_K K[\tau] \\
&\cong& \bD_{K[\gs]}^{-i+\# \gs^\rc}(\fl_\gs(M)).
\end{eqnarray*}
These isomorphisms commute with the differential maps, and we have an isomorphism 
$$\fl_\gs(\bD_S^\bullet(M) \cong \bD_{K[\gs]}^{\bullet+\# \gs^\rc }(\fl_\gs(M))$$
of cochain complexes.  Hence the assertion follows from the following computation 
\begin{eqnarray*}
\fl_\gs(\Ext^{n-i}_S(M, \omega_S)) 
&\cong& \fl_\gs (H^{-i}(\bD^\bullet_S(M)))\\
&\cong& H^{-i} (\fl_\gs(\bD^\bullet_S(M)))\\
&\cong& H^{-i+\# \gs^\rc}(\bD_{K[\gs]}^\bullet(\fl_\gs(M)))\\
&\cong& \Ext_{K[\gs]}^{\# \gs -i+\# \gs^\rc}(\fl_\gs(M), \omega_{K[\gs]})\\
&\cong& \Ext_{K[\gs]}^{n-i}(\fl_\gs(M), \omega_\gs).
\end{eqnarray*}
\end{proof}

The following is a generalization of Equation \ref{LyuNum Graded}.

\begin{theorem}\label{Thm LyuNum Loc}
Let $S=K[x_1,\ldots,x_n]$ be a polynomial ring over a field.
Let $\Delta\subset 2^{[n]}$ be a simplicial complex.
$$\lambda_{i,j}(K[\Delta]_{{\fp}_\gs}) = \dim_K [\Ext^{\# \gs-i}_S(\Ext_S^{\# \gs -j}(K[\Delta], \omega_S), \omega_S)]_{\gs^\rc}.$$
\end{theorem}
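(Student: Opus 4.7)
The plan is to combine equation~\eqref{localizing S-R}, which expresses $\lambda_{i,j}(K[\Delta]_{\fp_\gs})$ via iterated $\Ext$ over the smaller polynomial ring $K[\gs]$ applied to the link $K[\lk_\Delta\gs^\rc]$, with Lemma~\ref{fl and Ext}, which is exactly the compatibility between the restriction functor $\fl_\gs$ and $\Ext$ against the canonical module. The idea is to pull both $\Ext$ computations back up to the ambient ring $S$, at the cost of reading off a nonzero multidegree instead of degree $\zero$.

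First I would combine the graded reinterpretation~\eqref{LyuNum Graded}, applied to the Stanley-Reisner ring $K[\lk_\Delta\gs^\rc]$ inside $K[\gs]$, with~\eqref{localizing S-R}, to rewrite
$$\lambda_{i,j}(K[\Delta]_{\fp_\gs})=\dim_K\bigl[\Ext_{K[\gs]}^{\#\gs-i}\bigl(\Ext_{K[\gs]}^{\#\gs-j}(K[\lk_\Delta\gs^\rc],\omega_{K[\gs]}),\omega_{K[\gs]}\bigr)\bigr]_\zero.$$
Next I would apply Lemma~\ref{fl and Ext} twice: once to $M=K[\Delta]$, using the identity $\fl_\gs(K[\Delta])\cong K[\lk_\Delta\gs^\rc]$ from Section~\ref{Pre Monomials}, to convert the inner $\Ext$; and then a second time to $N:=\Ext^{\#\gs-j}_S(K[\Delta],\omega_S)$, to convert the outer $\Ext$. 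The second application is legitimate because $N$ is again squarefree, which is the $\ba=\mathbf{1}$ case of the property $\Ext_S^k(M,S(-\ba))\in\gmod_\ba S$ recalled in Section~\ref{Pre Monomials}. Iterating the lemma then yields an isomorphism in $\Sq K[\gs]$,
$$\fl_\gs\bigl(\Ext_S^{\#\gs-i}(\Ext_S^{\#\gs-j}(K[\Delta],\omega_S),\omega_S)\bigr)\cong \Ext_{K[\gs]}^{\#\gs-i}\bigl(\Ext_{K[\gs]}^{\#\gs-j}(K[\lk_\Delta\gs^\rc],\omega_{K[\gs]}),\omega_{K[\gs]}\bigr).$$

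Finally, taking $\dim_K$ of the degree-$\emptyset$ component of both sides, and using $(\fl_\gs(P))_\emptyset = P_{\gs^\rc}$ from the description of $\fl_\gs$ given in Section~\ref{Pre Monomials}, gives the stated formula. The only real subtlety to monitor is the cohomological index: Lemma~\ref{fl and Ext} preserves the cohomological degree as we pass from $S$ to $K[\gs]$, so the shifts $\#\gs-i$ and $\#\gs-j$ on the $S$-side genuinely agree with those on the $K[\gs]$-side and there is no hidden reindexing arising from the change of ambient dimension from $n$ to $\#\gs$. Beyond tracking these indices, the argument is essentially a two-step functorial translation.
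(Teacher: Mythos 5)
Your proof is correct and follows essentially the same route as the paper: both arguments apply Lemma~\ref{fl and Ext} twice (once to $K[\Delta]$ and once to the squarefree module $\Ext_S^{\#\gs-j}(K[\Delta],\omega_S)$) to obtain the isomorphism $\fl_\gs\bigl(\Ext^{\#\gs-i}_S(\Ext^{\#\gs-j}_S(K[\Delta],\omega_S),\omega_S)\bigr)\cong \Ext^{\#\gs-i}_{K[\gs]}(\Ext^{\#\gs-j}_{K[\gs]}(K[\lk_\Delta\gs^\rc],\omega_{K[\gs]}),\omega_{K[\gs]})$, then take the degree-$\zero$ (i.e.\ $\gs^\rc$) piece and invoke~\eqref{localizing S-R}. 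The only difference is purely presentational (the paper writes the chain from $S$ down to $K[\gs]$, you write it in the opposite direction), and your explicit verification that the inner $\Ext$ is again squarefree is exactly the implicit step the paper relies on.
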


\begin{proof}
By Lemma~\ref{fl and Ext}, we ave
\begin{eqnarray*}
 \fl_\gs(\Ext^i_S(\Ext_S^j(K[\Delta], \omega_S), \omega_S)) 
&\cong& \Ext^i_{K[\gs]}(\fl_\gs(\Ext_S^j(K[\Delta], \omega_S)), \omega_{K[\gs]}) \\
&\cong&  \Ext^i_{K[\gs]}(\Ext_{K[\gs]}^j(\fl_\gs(K[\Delta]), \omega_{K[\gs]}), \omega_{K[\gs]})  \\
&\cong&  \Ext^i_{K[\gs]}(\Ext_{K[\gs]}^j(K[\lk_\Delta \gs^\rc], \omega_{K[\gs]}), \omega_{K[\gs]}). 
\end{eqnarray*}
Hence we have 
\[
[\Ext^i_S(\Ext_S^j(K[\Delta], \omega_S), \omega_S)]_{\gs^\rc} \cong 
 [\Ext^i_{K[\gs]}(\Ext_{K[\gs]}^j(K[\lk_\Delta \gs^\rc], \omega_{K[\gs]}), \omega_{K[\gs]}) ]_0.
\]
Combining this fact with \eqref{localizing S-R}, we are done. 
\end{proof}

The next result easily follows the above theorem.  

\begin{corollary}\label{lower bound}
Let $S=K[x_1,\ldots,x_n]$ be a polynomial ring over a field, and $\m=(x_1,\ldots,x_n)$.
Let $\Delta\subset 2^{[n]}$ be a simplicial complex.
If $R$ is the localization $K[\Delta]_\m$ of a Stanley-Reisner ring $K[\Delta]=S/I_\Delta$, the number $B$ presented in Theorem \ref{Main Localization} must satisfy   
$$B \ge  \dim_K [\Ext^i_S(\Ext_S^j(K[\Delta], \omega_S), \omega_S)]_\gs$$ 
for all $\gs \subset [n]$ and all  $i, j$. 
\end{corollary}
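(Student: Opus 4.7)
The plan is to derive the inequality as a direct consequence of Theorem \ref{Thm LyuNum Loc} combined with Theorem \ref{Main Localization}, with some bookkeeping to handle indices outside the natural range. Concretely, for each fixed $\gs\subset [n]$, I would apply Theorem \ref{Thm LyuNum Loc} to the monomial prime $\fp_{\gs^\rc}=(x_i\mid i\in\gs)$. Since $(\gs^\rc)^\rc=\gs$ and $\#\gs^\rc=n-\#\gs$, that theorem specializes to
$$
\lambda_{a,b}(K[\Delta]_{\fp_{\gs^\rc}})=\dim_K\left[\Ext^{n-\#\gs-a}_S(\Ext^{n-\#\gs-b}_S(K[\Delta],\omega_S),\omega_S)\right]_{\gs},
$$
and Theorem \ref{Main Localization} bounds the left-hand side by $B$ uniformly in $a,b$.

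For any pair $(i,j)$ with $0\le i,j\le n-\#\gs$, I would then set $a=n-\#\gs-i$ and $b=n-\#\gs-j$, so that $a,b\ge 0$ are valid Lyubeznik indices, obtaining
$$
\dim_K\left[\Ext^i_S(\Ext^j_S(K[\Delta],\omega_S),\omega_S)\right]_{\gs}=\lambda_{a,b}(K[\Delta]_{\fp_{\gs^\rc}})\le B,
$$
which is the desired bound.

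For the remaining case $i>n-\#\gs$ or $j>n-\#\gs$, I would verify that the graded component vanishes. Applying the exact functor $\fl_{\gs^\rc}$ and invoking Lemma \ref{fl and Ext} twice (using that $K[\Delta]$ and $\Ext^j_S(K[\Delta],\omega_S)$ are both squarefree) yields
$$
\fl_{\gs^\rc}\left(\Ext^i_S(\Ext^j_S(K[\Delta],\omega_S),\omega_S)\right)\cong\Ext^i_{K[\gs^\rc]}(\Ext^j_{K[\gs^\rc]}(\fl_{\gs^\rc}(K[\Delta]),\omega_{K[\gs^\rc]}),\omega_{K[\gs^\rc]}),
$$
a module over a polynomial ring of Krull dimension $n-\#\gs$, whose $\Ext$ functors vanish above that dimension. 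Since the degree-$\gs$ component of a squarefree $S$-module coincides with the degree-$\zero$ component of its image under $\fl_{\gs^\rc}$, the component in the excluded range is zero and the inequality $0\le B$ holds trivially. The only real obstacle is notational: one must carefully track the duality $\gs\leftrightarrow\gs^\rc$ between the degree index and the prime-ideal subscript in Theorem \ref{Thm LyuNum Loc}. No new substantive ingredient beyond that theorem and Theorem \ref{Main Localization} is needed.
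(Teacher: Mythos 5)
Your proof is correct and takes essentially the same route the paper intends: the paper offers no explicit argument, stating only that the corollary "easily follows the above theorem," and the intended derivation is precisely your substitution $\gs\mapsto\gs^\rc$ in Theorem \ref{Thm LyuNum Loc} (so that $\fp_{\gs^\rc}=(x_i\mid i\in\gs)$ and $\#\gs^\rc=n-\#\gs$), followed by applying the bound from Theorem \ref{Main Localization}. Your extra check that the graded component vanishes when $i$ or $j$ exceeds $n-\#\gs$ (via $\fl_{\gs^\rc}$ and Lemma \ref{fl and Ext}, using that $K[\gs^\rc]$ has Krull dimension $n-\#\gs$) is a careful handling of the edge case that the paper leaves implicit; it is correct and harmless, though not strictly needed to reproduce the paper's reasoning.
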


\begin{remark}\label{Grow with Loc}
(1) It is easy to connect Corollary~\ref{lower bound} with the argument in \S3. 
In fact, if $\cdots \to S^{b_i} \to \cdots \to S^{b_0} \to 0$ is a free resolution of $\Ext^j_S(K[\Delta], \omega_S)$, then we have  
$$b_i \ge  \dim_K [\Ext^i_S(\Ext_S^j(K[\Delta], \omega_S), \omega_S)]_\gs.$$ 

(2) Let $I_\Delta \subset S=K[x_1, \ldots, x_n]$ be a Stanley-Reisner ideal, and set $S':= S[x_{n+1}]=K[x_1, \ldots, x_{n+1}]$. 
Consider the monomial ideal $I' := I_\Delta S' \cap (x_{n+1})$
of $S'$. Since the codimension of $I'$ is 1,  $S'/I'$ (more precisely, its localization at the graded maximal ideal of) 
has the trivial Lyubeznik table as shown in 
\cite{AY-14}. However, for a prime ideal ${\fp} := (x_1, \ldots, x_n)$ of $S'$, we have 
$$\lambda_{i,j}((S'/I')_{\fp})= \lambda_{i,j}(S/I_\Delta).$$
Hence the Lyubeznik table of $(S'/I')_{\fp}$ can be far from trivial. 
\end{remark}

Besides the bad behavior under localization of the Lyubeznik numbers, the generalized Lyubeznik numbers and the multiplicities of local cohomology behave as expected with respect to localization.
\begin{proposition}\label{Prop Loc Gral LyuNum}
Let $S=K[x_1,\ldots,x_n]$ be a polynomial ring over a field.
Let $I\subset S$ denote an squarefree monomial ideal and $d=\dim(S/I)$ and $\m=(x_1,\ldots,x_n)$. Let ${\fp}\subset S$ denote any prime ideal containing $I,$  and $h=\dim(S/I)-\dim(S_{\fp}/IS_{\fp})$.
Then,
$$
\lambda^{j-h}_{0}\left(S_{\fp}/IS_{\fp}\right)
\leq \lambda^{j}_0\left(S/I\right)
$$
for $j\in\NN.$ Moreover,
$$
\gamma_{i-h,j-h}\left(S_{\fp}/IS_{\fp}\right)
\leq \gamma_{i,j}\left(S/I\right).
$$
\end{proposition}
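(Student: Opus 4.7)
My plan is to use the $D$-module filtration of $H^\bullet_I(S)$ from Remark~\ref{RemGamma} (applied with $S$ viewed as the power series ring $K[[x_1,\ldots,x_n]]$, which is the ring on which the generalized Lyubeznik numbers are computed). For squarefree monomial $I$, every $H^k_I(S)$ admits a finite $D_K(S)$-module composition series whose factors are of the form $H^{|\alpha|}_{\fp_\alpha}(S)$ for squarefree $\alpha\in\{0,1\}^n$. Thus $\lambda^j_0(S/I)=\Length_{D_K(S)}H^{n-j}_I(S)$ simply counts these composition factors, and refining the count according to $|\alpha|$ yields $\gamma_{i,j}(S/I)$.

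The key step is to trace this filtration under localization at $\fp$ and subsequent completion. Since local cohomology commutes with flat base change and completion is exact for Noetherian rings, we obtain a filtration of $H^k_{\wI}(T)$, where $T$ is the power series ring on which $\lambda^\bullet_0(S_\fp/IS_\fp)$ is computed and $\wI=\widehat{IS_\fp}$. Each composition factor $H^{|\alpha|}_{\fp_\alpha}(S)$ maps to either the zero module---precisely when $\fp_\alpha\not\subset\fp$, since $\fp_\alpha$ then becomes the unit ideal in $S_\fp$---or to the simple $D_L(T)$-module $H^{|\alpha|}_{\fp_\alpha T}(T)$. Consequently the length comparison
\[
\Length_{D_L(T)}H^{k}_{\wI}(T)\leq\Length_{D_K(S)}H^k_I(S)
\]
holds for every cohomological degree $k$. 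Translating via the relation $h=(n-\mathrm{ht}\,I)-(n'-\mathrm{ht}\,\wI)$, where $n'=\dim T$ and which is equivalent to the defining formula $h=\dim(S/I)-\dim(S_\fp/IS_\fp)$ (both $S$ and $T$ being regular), the number of surviving composition factors is exactly $\lambda^{j-h}_0(S_\fp/IS_\fp)$, and it is bounded by $\lambda^j_0(S/I)$. The $\gamma$-inequality follows by refining the same count: a surviving factor with $|\alpha|=n-i$ in $S$ is re-indexed in $T$ to $|\alpha|=n'-(i-h)$, so summing over surviving factors of the appropriate size gives $\gamma_{i-h,j-h}(S_\fp/IS_\fp)\leq\gamma_{i,j}(S/I)$.

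The main obstacle is the index bookkeeping. The filtration argument itself is clean: each composition factor survives as a simple $D_L(T)$-module or vanishes outright, so lengths can only decrease. The delicate part is converting the cohomological-degree length comparison into the stated shift by $h$, which requires combining the change in ambient dimension $n\mapsto n'$ with the change in ideal height $\mathrm{ht}\,I\mapsto\mathrm{ht}\,\wI$; in the non-equidimensional case these two pieces interact in a way that must be carefully unpacked. The clean geometric content driving the whole argument is the vanishing criterion: $H^{|\alpha|}_{\fp_\alpha}(S)$ survives localization at $\fp$ if and only if $\fp_\alpha\subset\fp$.
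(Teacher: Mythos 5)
Your approach is essentially identical to the paper's: filter $H^{n-j}_I(S)$ by $D$-module composition factors $H^{|\alpha|}_{\fp_\alpha}(S)$ (via Remark~\ref{RemGamma}/\cite[Prop.~2.10]{YStr}), pass to the flat extension $A=\widehat{S_\fp}$, and observe that each factor either survives as a simple $D_L(A)$-module or dies precisely according to whether $\fp_\alpha\subset\fp$, yielding the length inequality and, by refining along $|\alpha|$, the $\gamma$-inequality. You are also right to flag the index bookkeeping as the delicate step: converting $\Length H^{\dim S_\fp-(j-h)}_{I}(\widehat{S_\fp})$ into $\Length H^{n-j}_{I}(\widehat{S_\fp})$ uses $\dim S_\fp + h = n$, which amounts to $\operatorname{ht}(IS_\fp)=\operatorname{ht}(I)$ -- exactly the point the paper also passes over silently -- so leaving it as ``must be carefully unpacked'' is the one place your write-up (like the paper's) stops short of a complete verification.
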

\begin{proof}
We fix $j\in\NN.$
Since $I$ is a monomial ideal, there exists a filtration of
$D_K(S)$-modules
$$
0=M_0\subset M_1\subset M_2\subset \ldots \subset M_\ell=H^{n-j}_I(S)
$$
such that $M_{t}/M_{t-1}\cong H^{|\alpha|}_{\fp_{\alpha}}(S)$
for some $\alpha\subset [n]$ 
\cite[Proposition $2.10$ and Remark $2.12$]{YStr}. We recall that
$$m_{j,-\alpha}=\#
\{t|M_{t}/M_{t-1}\cong H^{|\alpha|}_{\fp_{\alpha}}(S)\}.$$
We note that $H^{|\alpha|}_{\fp_\alpha}(R)$ is a simple $D_K(R)$-module, and so,
$\Length_{D_K (S)}H^{n-j}_{I}(S)=\sum_{\alpha} m_{n-j,-\alpha}.$

Let $A=\widehat{S_{\fp}}$ and $L=S_{\fp}/{\fp}S_{\fp}.$
By Cohen Structure Theorems, $A$ is a power series ring over $L.$ 
We have that for every variable $x_r\in S,$  $x_r$ is either a unit or a regular element in $A.$ Hence, $IA$ is still induced from a squarefree monomial ideal. 
We note that $A$ is a flat extension of $S$ and that
$H^{n-j}_I(S)\otimes_S A\cong H^{n-j}_{I}(A)$ as $D_L(A)$-modules. 
In addition, 
$$
0=M_0\otimes_S A\subset M_1\otimes_S A\subset M_2\otimes_S A\subset \ldots \subset M_\ell\otimes_A =H^{n-j}_I(A)
$$
gives a filtration of $D_L(A)$-modules such that
$M_t\otimes_S A/M_{t-1}\otimes_S A\cong H^{|\alpha|}_{\fp_\alpha A}(A)$ is either a simple $D_L(A)$-module or zero (depending on whether $\fp_\alpha\subset {\fp}$).
Let 
$$
m'_{-\alpha}=\#
\{t|M_{t}/M_{t-1}\otimes_S A\cong H^{|\alpha|}_{\fp_{\alpha}}(A)\hbox{ and } \fp_\alpha \subset {\fp}\}.
$$
We note that $m'_{-\alpha}\leq m_{n-j,-\alpha}.$
Hence, 
\begin{align*}
\lambda^{j-h}_{0}(S_{\fp}/IS_{\fp})&= \Length_{D_L (S_{\fp})}H^{\dim(S_{\fp})-j+h}_{I}(S_{\fp})\\
&=\Length_{D_L (S_{\fp})}H^{n-j}_{I}(S_{\fp})\\
&= \sum_{\alpha} m'_{-\alpha}\\
& \leq  \sum_{\alpha} m_{n-j,-\alpha}\\
& =\Length_{D_K (S)}H^{\dim(S)-h-j}_{I}(S)\\
& =\lambda^{j+h}_{0}(S_\m/IS_\m).
\end{align*}
Moreover, 
$$
\gamma_{i-h,j-h}(S_{\fp}/IS_{\fp}) =\sum_{|\alpha|=n-i} n_\alpha 
\leq \sum_{|\alpha|=n-i} m_{n-i,\alpha} =
\gamma_{i,j}(S/I).
$$
\end{proof}

\begin{remark}
As noted in Remark \ref{RemMultBetti}, the length $H^i_{I_\Delta}(R)$ as a $D_K(S)$ is determined by 
the $\ZZ^n$-graded Betti numbers of the Alexander dual of $I_\Delta$. The same applies to the invariants $\gamma_{i,j}$.
We point out that from this characterization, it does not follow that the generalized Lyubeznik number cannot increase under localization. This is because the multigrading is lost after localization; for instance, the linear strands of a graded free resolution are not preserved. 
\end{remark}

\section{Polarization and Lyubeznik numbers of Stanley-Reisner rings}\label{SecPol}
The main theorem in this section we prove that the (original) Lyubeznik numbers
given by a monomial ideal and its polarization are essentially the same.

\begin{theorem}\label{Main Thm Pol LyuNum}
Let $S=K[x_1,\ldots,x_n]$ be a polynomial ring, $\m=(x_1,\ldots,x_n)$ and $I\subset S$ be a monomial ring. Let $\widetilde{I}$ denote the polarization of $I,$ and  $\widetilde{S}=K[x_{r,s}]$ denote the polarization ring. Let $\fn$ be the maximal homogeneous ideal of $\widetilde{S}$
and  $h=\dim(\widetilde{S}/\widetilde{I})-\dim(S/I)$.
Then,
$$
\lambda_{i-h,j-h}\left(S_\m/IS_\m\right)= \lambda_{i,j}\left(\widetilde{S}_\fn/\widetilde{I}\widetilde{S}_\fn\right).
$$
for every $i,j\in\NN.$
\end{theorem}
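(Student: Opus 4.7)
The plan is to express both sides as degree-zero pieces of iterated $\Ext$ modules and then to transfer the computation between $S$ and $\widetilde{S}$ using the polarization functors $\pol,\dpol:\gmod_\ba S\to \Sq\widetilde{S}$ of \cite{Y12}. Since $\widetilde{I}$ is squarefree, $\widetilde{S}/\widetilde{I}$ is a Stanley--Reisner ring and Equation \eqref{LyuNum Graded} gives
\[
\lambda_{i,j}(\widetilde{S}_\fn/\widetilde{I}\widetilde{S}_\fn)=\dim_K\bigl[\Ext^{\widetilde{n}-i}_{\widetilde{S}}\bigl(\Ext^{\widetilde{n}-j}_{\widetilde{S}}(\widetilde{S}/\widetilde{I},\omega_{\widetilde{S}}),\omega_{\widetilde{S}}\bigr)\bigr]_0,
\]
where $\widetilde{n}=\dim\widetilde{S}$. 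Since $\Theta$ is a regular sequence of length $\widetilde{n}-n$ on $\pol(S/I)=\widetilde{S}/\widetilde{I}$ with quotient $S/I$, we have $h=\widetilde{n}-n$.

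For the left-hand side I would extend \eqref{LyuNum Graded} from $\Sq S$ to the category $\gmod_\ba S$ of positively $\ba$-determined modules, with $\ba$ chosen large enough that $S/I\in\gmod_\ba S$ and with $\omega_S$ replaced by the natural dualizing object $S(-\ba)$. The resulting formula reads
\[
\lambda_{i-h,j-h}(S_\m/IS_\m)=\dim_K\bigl[\Ext^{\widetilde{n}-i}_S\bigl(\Ext^{\widetilde{n}-j}_S(S/I,S(-\ba)),S(-\ba)\bigr)\bigr]_0,
\]
where the shift of indices by $h$ is absorbed via $n-(i-h)=\widetilde{n}-i$. The derivation parallels Yanagawa's squarefree argument, using that $\Ext^k_S(M,S(-\ba))\in \gmod_\ba S$ whenever $M\in\gmod_\ba S$, and invoking multigraded local duality to identify Bass numbers of $H^{\widetilde{n}-j}_I(S)$ at $\m$ with the degree-zero pieces of the double $\Ext$.

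The decisive step is to show that polarization intertwines $\Ext$: for every $M\in\gmod_\ba S$,
\[
\Ext^k_{\widetilde{S}}(\pol M,\omega_{\widetilde{S}})\cong \dpol\bigl(\Ext^k_S(M,S(-\ba))\bigr),
\]
together with the analogous identity in which the roles of $\pol$ and $\dpol$ are interchanged. This rests on three facts: (i) both $\pol$ and $\dpol$ send minimal free resolutions in $\gmod_\ba S$ to minimal free resolutions in $\Sq\widetilde{S}$, refining Peeva's preservation of Betti numbers; (ii) $\pol(S(-\ba))=\widetilde{S}(-\widetilde{\mathbf{1}})=\omega_{\widetilde{S}}$; and (iii) the degree identity $\widetilde{\mathbf{1}}-\pol(\bb)=\dpol(\ba-\bb)$ for every $\bb\preceq \ba$, which at the level of free modules yields
\[
\Hom_{\widetilde{S}}(\pol S(-\bb),\omega_{\widetilde{S}})=\widetilde{S}(-\dpol(\ba-\bb))=\dpol\bigl(\Hom_S(S(-\bb),S(-\ba))\bigr).
\]
Naturality with respect to the monomial-valued differentials of the minimal free resolution promotes these identifications into an isomorphism of chain complexes and hence into the stated isomorphism of $\Ext$ modules.

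Applying this commutation twice with $M=S/I$, and using $\pol(S/I)=\widetilde{S}/\widetilde{I}$, the $\pol$- and $\dpol$-variants combine to yield
\[
\Ext^{\widetilde{n}-i}_{\widetilde{S}}\bigl(\Ext^{\widetilde{n}-j}_{\widetilde{S}}(\widetilde{S}/\widetilde{I},\omega_{\widetilde{S}}),\omega_{\widetilde{S}}\bigr)\cong \pol\bigl(\Ext^{\widetilde{n}-i}_S(\Ext^{\widetilde{n}-j}_S(S/I,S(-\ba)),S(-\ba))\bigr).
\]
Extracting the degree-zero component via $[\pol N]_0\cong N_0$ and combining with the two preceding Ext formulas gives the theorem. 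I expect the main obstacle to be the $\Ext$--polarization intertwining: although all checks are arithmetic at the level of free modules, promoting them to a naturality statement that respects all differentials of the minimal free resolution requires careful bookkeeping of the $\pol$/$\dpol$ conventions and of the $\ZZ^n$-grading. The $\gmod_\ba S$-extension of \eqref{LyuNum Graded} is the other non-trivial ingredient, but it should be formal once Yanagawa's dualizing apparatus is set up in the positively $\ba$-determined category.
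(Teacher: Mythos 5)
Your overall architecture is the right one, and the polarization half of your argument is essentially what the paper does: the two intertwining isomorphisms
\[
\Ext^j_{\wS}(\pol(S/I),\omega_{\wS})\cong \dpol\bigl(\Ext^j_S(S/I,S(-\ba))\bigr),\qquad
\Ext^i_{\wS}(\dpol(N),\omega_{\wS})\cong \pol\bigl(\Ext^i_S(N,S(-\ba))\bigr)
\]
are exactly \cite[Theorem~4.4]{Y12}, which the paper cites rather than reproves, and from them one gets $\Ext^i_{\wS}(\Ext^j_{\wS}(\wS/\wI,\omega_{\wS}),\omega_{\wS})\cong\pol\bigl(\Ext^i_S(\Ext^j_S(S/I,\omega_S),\omega_S)\bigr)$ and extracts degree zero via $[\pol(-)]_\zero\cong(-)_\zero$. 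Your numerology $n-(i-h)=\widetilde n - i$ is also correct.

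The gap is in how you handle the left-hand side. You propose to extend Equation \eqref{LyuNum Graded} to $\gmod_\ba S$ and assert that the resulting formula
\[
\lambda_{i-h,j-h}(S_\m/IS_\m)=\dim_K\bigl[\Ext^{\widetilde n-i}_S(\Ext^{\widetilde n-j}_S(S/I,S(-\ba)),S(-\ba))\bigr]_0
\]
``should be formal once Yanagawa's dualizing apparatus is set up in the positively $\ba$-determined category.'' It is not formal, and this is where the real work lies. The obstruction is that $H^{n-j}_I(S)=H^{n-j}_{\sqrt I}(S)$ sees only the radical, whereas $\Ext^{n-j}_S(S/I,\omega_S)$ is genuinely positively $\ba$-determined and differs from $\Ext^{n-j}_S(S/\sqrt I,\omega_S)$ as a module; a Yanagawa-style graded Matlis/local duality in $\gmod_\ba S$ will not simply identify the double-$\Ext$ of $S/I$ with Bass numbers of the (squarefree) local cohomology module. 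What is actually needed, and what the paper proves, is the degree-zero comparison
\[
\bigl[\Ext^i_S(\Ext^j_S(S/I,\omega_S),\omega_S)\bigr]_\zero\cong\bigl[\Ext^i_S(\Ext^j_S(S/\sqrt I,\omega_S),\omega_S)\bigr]_\zero,
\]
which the paper obtains from the Ene--Okazaki exact functors $\fr^*$ and $\fs^*$ and the identifications $\fr^*(\Ext^i_S(\Ext^j_S(S/I,\omega_S),\omega_S))\cong\Ext^i_S(\Ext^j_S(S/\sqrt I,\omega_S),\omega_S)$ together with $[\Ext^j_S(S/I,\omega_S)]_{\succeq\zero}\cong\Ext^j_S(S/\sqrt I,\omega_S)$ from \cite[Theorem~2.3]{RadicalModules}. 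Once this step is supplied, your reduction goes through and you reconverge with the paper's proof; as written, you have quietly absorbed the hardest part of the argument into an unverified ``extension.''
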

\begin{proof}
It suffices to prove that 
$$
\left(\Ext^i_S(\Ext^j_S(S/\sqrt{I}, \omega_S),\omega_S)\right)_{\zero} \cong \left(\Ext^i_{\wS}(\Ext^j_{\wS}(\wS/\wI, \omega_{\wS}), \omega_{\wS})\right)_{\zero}
$$
for all $i,j$ (the role of $i,j$ is different from that in the original statement). 
We start from the following computation. 
\begin{eqnarray*}
\Ext^i_{\wS}(\Ext^j_{\wS}(\wS/\wI, \omega_{\wS}), \omega_{\wS})
&\cong&\Ext^i_{\wS}(\Ext^j_{\wS}(\pol(S/I), \omega_{\wS}), \omega_{\wS}) \\
&\cong& \Ext^i_{\wS}(\dpol(\Ext^j_S(S/I, S(-\ba))), \omega_{\wS}) \\
&\cong& \pol(\Ext^i_S(\Ext^j_S(S/I, S(-\ba)), S(-\ba))) \\
&\cong& \pol(\Ext^i_S(\Ext^j_S(S/I, \omega_S), \omega_S)),  
\end{eqnarray*}
where the second and the third isomorphisms follow from \cite[Theorem~4.4]{Y12}. 
Hence we have 
\begin{equation}\label{toriaezuno matome}
\left(\Ext^i_{\wS}(\Ext^j_{\wS}(\wS/\wI, \omega_{\wS}), \omega_{\wS})\right)_{\zero} \cong 
\left(\Ext^i_S(\Ext^j_S(S/I, \omega_S), \omega_S)\right)_{\zero}.
\end{equation}

For a $\bZ^n$-graded module $M=\bigoplus_{\bb \in \bZ^n}M_\bb$, set $M_{\succeq \zero}=\bigoplus_{\bb \succeq \zero}M_\bb$. 
Then, \cite[Theorem~2.3 (1) (a)]{RadicalModules}  states that 
\begin{equation}\label{Ext and radical}
\left(\Ext^j_S(S/I, \omega_S)\right)_{\succeq \zero} \cong \Ext^j_S(S/\sqrt{I}, \omega_S).
\end{equation}

Ene and Okazaki \cite{RadicalModules} constructed two exact functors $\fr^*: \gmod_\ba S \to \Sq S$ and  $\fs^*: \gmod_\ba S \to \Sq S$. 
Here $\fr^*$ is the ``radical functor'' sending a monomial ideal $I$ (resp. $S/I$) to $\sqrt{I}$ (resp. $S/\sqrt{I}$), and $\fs^*$ is just defined by 
$\fs^*(M)=[M(\ba-\mathbf 1)]_{\succeq \zero}$. Now we have 
\begin{eqnarray*}
\fr^*\left(\Ext^i_S(\Ext^j_S(S/I, \omega_S), \omega_S)\right) & \cong &  \fr^*\left(\Ext^i_S(\Ext^j_S(S/I, S(-\ba)), S(-\ba))\right) \\
 & \cong & \Ext^i_S(\fs^*(\Ext^j_S(S/I, S(-\ba))), \omega_S) \\
 & \cong & \Ext^i_S(\Ext^j_S(S/\sqrt{I}, \omega_S), \omega_S),
\end{eqnarray*}
where the second isomorphism follows from   \cite[Theorem~2.3 (1) (b)]{RadicalModules},  and the third one follows from 
\eqref{Ext and radical} (recall that $\omega_S =S(-{\mathbf 1})$). By the construction of $\fr^*$, we have 
$$\left(\Ext^i_S(\Ext^j_S(S/I, \omega_S), \omega_S)\right)_{\zero} \cong \left(\Ext^i_S(\Ext^j_S(S/\sqrt{I}, \omega_S), \omega_S)\right)_{\zero}.$$
Combining this with \eqref{toriaezuno matome}, we have the expected isomorphism. 
\end{proof}

\begin{remark}
Theorem~\ref{Main Thm Pol LyuNum} is a delicate result  in the following sense. For example, set
$$ I:=(x^2y, x^2z, xyz, xz^2, y^3, y^2z,yz^2) \subset S:=\kk[x,y,z]$$ 
and
$$I' := (x_1x_2y_3, x_1x_2z_3, x_1y_2z_3, x_1z_2z_3, y_1y_2y_3, y_1y_2z_3, y_1z_2z_3) \subset \wS:= \kk[x_i, y_i, z_i | i=1,2,3]. $$
Then  $\Theta$ forms an $\wS/I'$-regular sequence, and gives an isomorphism $\wS/(I'+ (\Theta)) \cong S/I$ (see last part of Subsection \ref{Pre Monomials} to recall notation). 
In this sense, $I'$ is a ``non-standard'' polarization of $I$. We see that $S/I$ is sequentially Cohen-Macaulay, and has a trivial 
Lyubeznik table \cite{AM-SCM,AY-14}. However, we have 
$$
\lambda_{i,j}(\wS/I')=
\begin{cases}
2 & \text{if $i=j=7$,}\\
1 & \text{if $i=5, j=6$,}\\
0 & \text{otherwise.}
\end{cases}
$$
Hence a non-standard polarization may not preserve the Lyubeznik table.  
\end{remark}

\begin{remark}\label{Pol Highest LyuNum}
Under the Notation of Theorem \ref{Main Thm Pol LyuNum}, we can give a different proof of
$$
\lambda_{d,d}\left(S/I\right)= \lambda_{d',d'}\left(\widetilde{S}/\widetilde{I}\right).
$$
by comparing the Hochster-Huneke graph of $S/I$ and $\widetilde{S}/\widetilde{I}.$
The highest Lyubeznik number  for the localization of an Stanley-Reisner ring at the maximal homogeneous ideal is equal to the number of the connected components of the Hochster-Huneke graph by Remark \ref{Rem HH Mon}. 
The vertices of the Hochster-Huneke graph are the minimal primes of minimum height. Let $\{g_1,\dots,g_l\}$ be the set of minimal monomial generators  of $I$ and let $\wg_i$ be the polarization of $m_i$. Then $\{\wg_1,\dots,\wg_l\}$ is the set of minimal monomial generators of $\widetilde{I}$.

We make two observations. First, $(x_{i_1},\ldots,x_{i_r})$ is a vertex in the Hochster graph of $I$ if and only if 
$(x_{i_1,1},\ldots,x_{i_r,1})$ is a vertex of the 
Hochster-Huneke graph of $\widetilde{I}$.
Second, if $(x_{i_1 ,c_1},\dots,x_{i_r,c_r})$ is a vertex of 
the Hochster-Huneke graph of $\widetilde{I}$, then 
$(x_{i_1,b_1},\dots,x_{i_r,b_r})$ is a vertex of 
Hochster-Huneke graph of $\widetilde{I}$ for all $b_j \leq c_j$.

We note that $(x_{i_1},\dots,x_{i_r})$ and $(x_{j_1},\dots,x_{j_r})$ are connected to each other by an edge if and only if $(x_{i_1 ,1},\dots,x_{i_r,1})$ and $(x_{j_1 ,1},\dots,x_{j_r,1})$ are connected to each other by an edge. In addition, $(x_{i_1,1},\dots,x_{i_r,1})$ and $(x_{i_1,c_1},\dots,x_{i_r,c_r})$ are connected via the following paths in Hochster-Huneke graph: 

$$(x_{i_1,1},x_{i_2,1},\ldots,x_{i_r,1}),
(x_{i_1,2},x_{i_2,1},\ldots,x_{i_r,1}),
\ldots,
(x_{i_1,c_1},x_{i_2,1},\ldots,x_{i_r,1})
$$
$$
(x_{i_1,c_1},x_{i_2,1},\ldots,x_{i_r,1}),
(x_{i_1,c_1},x_{i_2,2},\ldots,x_{i_r,1}),
\ldots,
(x_{i_1,c_1},x_{i_2,c_2},\ldots,x_{i_r,1})$$
$$\vdots$$
$$(x_{i_1,c_1},\ldots,x_{i_{r-1},c_{r-1}},x_{i_r,1}),
(x_{i_1,c_1},\ldots,x_{i_{r-1},c_{r-1}},x_{i_r,2}),
\ldots,
(x_{i_1,c_1},\ldots,x_{i_{r-1},c_{r-1}},x_{i_r,c_r}).
$$

By the previous assertions, we get that there is an bijective correspondence between connected components of the two Hochster-Huneke graphs under discussion, which sends the connected component of $(x_{i_1,c_1},\dots,x_{i_r,c_r})$ to the connected component of $(x_{i_1},\dots,x_{i_r})$.
 \end{remark}

\begin{proposition}\label{Prop Pol Gral LyuNum}
Let $S=K[x_1,\ldots,x_n]$ be a polynomial ring and $I\subset S$ 
be a monomial ring. Let $\widetilde{I}$ denote the polarization of $I,
$ and $\widetilde{S}=K[x_{r,s}]$ denote the polarization ring. 
Let $h=\dim(\widetilde{S}/\widetilde{I})-\dim(S/I)$.
Then,
$$
\lambda^{j-h}_{0}\left(S/\sqrt{I}\right)
\leq \lambda^{j}_0\left(\widetilde{S}/\widetilde{I}\right).
$$
Moreover,
$$
\gamma_{i-h,j-h}(S/I)
\leq \gamma_{i,j}\left(\widetilde{S}/\widetilde{I}\right).
$$
\end{proposition}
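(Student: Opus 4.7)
The plan is to deduce both inequalities from Proposition~\ref{Prop Loc Gral LyuNum} applied to the squarefree monomial ideal $\widetilde{I}\subset\widetilde{S}$ localized at the specific prime
$\mathfrak{p}=(x_{1,1},\ldots,x_{n,1})$ appearing in Remark~\ref{RelLocPol}. Polarizations are always squarefree, so Proposition~\ref{Prop Loc Gral LyuNum} directly yields
$$\lambda^{j-h'}_{0}\bigl((\widetilde{S}/\widetilde{I})_{\mathfrak{p}}\bigr)\leq \lambda^{j}_{0}(\widetilde{S}/\widetilde{I})\qquad\text{and}\qquad \gamma_{i-h',j-h'}\bigl((\widetilde{S}/\widetilde{I})_{\mathfrak{p}}\bigr)\leq \gamma_{i,j}(\widetilde{S}/\widetilde{I}),$$
where $h'=\dim(\widetilde{S}/\widetilde{I})-\dim((\widetilde{S}/\widetilde{I})_{\mathfrak{p}})$. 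The proof then reduces to two bookkeeping steps: verifying $h'=h$, and identifying the left-hand sides with the invariants of $S/\sqrt{I}$ (equivalently, of $S/I$ in the $\gamma$-case).

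For the dimension count, the regular sequence $\Theta$ of length $N=\dim\widetilde{S}-\dim S$ on $\widetilde{S}/\widetilde{I}$ gives $\dim(\widetilde{S}/\widetilde{I})=\dim(S/I)+N$. By Remark~\ref{RelLocPol}, the completion of $(\widetilde{S}/\widetilde{I})_{\mathfrak{p}}$ is $T/\sqrt{I}T$ with $T=L[[x_1,\ldots,x_n]]$, whose dimension equals $\dim(S/\sqrt{I})=\dim(S/I)$ since $I$ and $\sqrt{I}$ share the same minimal primes. Hence $h'=N=h$.

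The remaining step is to identify the invariants of $(\widetilde{S}/\widetilde{I})_{\mathfrak{p}}$ with those of $S/\sqrt{I}$. Both $\lambda^{\bullet}_{0}$ and $\gamma_{i,j}$ are computed from the $D$-module filtration of $H^{\bullet}_{\sqrt{I}T}(T)$ (via the completion $T/\sqrt{I}T$). Since $L=\mathrm{Frac}(K[x_{i,j}]_{j>1})$ is a field extension of $K$ of the same characteristic, base-changing the $D_{K}(K[[x_1,\ldots,x_n]])$-module filtration of $H^{\bullet}_{\sqrt{I}}(K[[x_1,\ldots,x_n]])$---whose simple subquotients are the $H^{|\alpha|}_{\fp_\alpha}(K[[x_1,\ldots,x_n]])$---yields a $D_L(T)$-module filtration of $H^{\bullet}_{\sqrt{I}T}(T)$ with the corresponding simple subquotients, so both length and multiplicities are preserved. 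This gives $\lambda^{j-h}_{0}((\widetilde{S}/\widetilde{I})_{\mathfrak{p}})=\lambda^{j-h}_{0}(S/\sqrt{I})$ and $\gamma_{i-h,j-h}((\widetilde{S}/\widetilde{I})_{\mathfrak{p}})=\gamma_{i-h,j-h}(S/\sqrt{I})=\gamma_{i-h,j-h}(S/I)$, the last equality following from $H^{\bullet}_I(S)=H^{\bullet}_{\sqrt{I}}(S)$. The main technical point to check will be the field-extension invariance, namely that $H^{|\alpha|}_{\fp_\alpha T}(T)$ remains a simple $D_L(T)$-module under the base change; this mirrors the remark after Definition~\ref{DefGenLyuNum}.
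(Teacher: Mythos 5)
Your proof is correct and follows essentially the same route as the paper: the paper also passes through localization of $\widetilde{S}/\widetilde{I}$ at the prime $(x_{1,1},\ldots,x_{n,1})$ and compares monomial filtrations of the two local cohomology modules over a common completion $A=\widehat{\widetilde{S}_Q}$, with the ``$S$-side'' filtration losing nothing and the ``$\widetilde{S}$-side'' filtration losing the subquotients $H^{|\alpha|}_{\fp'_\alpha}$ with $\fp'_\alpha\not\subset Q$. Your only genuine streamlining is to recognize that the inequality half of this comparison is literally an instance of Proposition~\ref{Prop Loc Gral LyuNum} (applied to the squarefree ideal $\widetilde{I}$ and the prime $\fp$) rather than something that needs a parallel base-change argument from scratch, leaving only the field-extension invariance of the filtration (the equality half) to be checked directly; this is a clean modularization of the same underlying idea, and your bookkeeping of $h'=h$ and the reduction $\gamma(S/\sqrt{I})=\gamma(S/I)$ are both correct.
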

\begin{proof}
We fix $j\in\NN.$
Let $\m=(x_1,\ldots,x_n)$ denote the 
maximal homogeneous ideal of $R.$
Since $I$ is a monomial ideal, there exists a filtration of
$D_K(S)$-modules
$$
0=M_0\subset M_1\subset M_2\subset \ldots \subset M_\ell=H^{n-j}_I(S)
$$
such that $M_{t}/M_{t-1}\cong H^{\Ht(\fp_t)}_{\fp_t}(S)$ 
where $\fp_t$ is a prime ideal generated by a subset of $\{x_1,\ldots,x_n\}$
\cite[Proposition $2.10$ and Remark $2.12$]{YStr}.
We recall that 
$$
m_{n-j,-\alpha}=\#
\{t|M_{t}/M_{t-1}\cong H^{|\alpha|}_{\fp_{\alpha}}(S)\}.$$
We note that $H^{\Ht(\fp_t)}_{\fp_t}(R)$ is a simple $D_K(S)$ for every $t=1,\ldots,\ell,$
and so, $\Length_{D_K(S)}H^{n-j}_I(S)=\sum_{\alpha}m_{n-j,-\alpha}=\ell.$
Let $Q=m\widetilde{S},$ $A=\widehat{\widetilde{S}_{Q}}$ and $L=A/{Q}A.$
We note that the induced map $S\to \widetilde{S}\to A$ makes $A$ a flat extension of $S.$
We have that 
$$
H^{n-j}_I(S)\otimes_S A\cong H^{n-j}_{I}(A)
$$ 
is a $D_L (A)$-module.
In addition, 
$$
0=M_0\otimes_S A\subset M_1\otimes_S A\subset M_2\otimes_S A\subset \ldots \subset M_\ell=H^{n-j}_I(A),
$$
gives a filtration of $D_F(A)$-modules such that
$M_t \otimes_S A/M_{t-1}\otimes_S A\cong H^{\Ht{\fp_t}}_{\fp_t}(A)\neq 0$ is a simple $D_L(A)$-module.
Hence, 
$$m_{n-i,-\alpha}=\#
\{t|M_{t}/M_{t-1}\otimes_S A\cong H^{|\alpha|}_{\fp_{\alpha}}(A)\}.$$

As before, there exists a filtration
of $D_K (\widetilde{S})$-modules
$$
0=M'_0\subset M'_1\subset M'_2\subset \ldots \subset M'_{\ell'}=H^{n-j}_{\widetilde{I}}(\widetilde{S})
$$
such that $M'_{t}/M'_{t-1}\cong H^{\Ht \fp'_t}_{\fp'_t}(\widetilde{S})$
where $\fp'_t$ is a prime ideal generated by a subset of $\{x_{r,s}\}$
\cite[Proposition $2.10$ and Remark $2.12$]{YStr}.
We note that $H^{\Ht{\fp'_t}}_{\fp'_t}(\widetilde{S})$ is a simple $D_L(\widetilde{S})$-module,
and so, $\Length_{D_K(\widetilde{S})}H^i_{\widetilde{I}}(\widetilde{S})=\ell'.$
We have that $A$ is a flat $\widetilde{S}$-algebra and
$$
H^i_{\widetilde{I}}(\widetilde{S})\otimes_{\widetilde{S}} A\cong H^{n-j}_{I}(A)
$$ 
is a $D_L( A)$-module.
As before, we have that 
$$
0=M'_0\otimes_{\widetilde{S}} A\subset M'_1\otimes_{\widetilde{S}} A\subset M'_2\otimes_{\widetilde{S}} A\subset \ldots \subset M'_{\ell'}\otimes_{\widetilde{S}} A=H^i_{\widetilde{I}}(A),
$$
gives a filtration of $D_L(A)$-modules.
We have that
$M'_{t\otimes_S A}/M'_{t-1}\otimes_{\widetilde{S}} A\neq 0$ 
if and only if $\fp'_t\subset \{x_{1,1},\ldots,x_{n,1}\},$ and if that is the case,  $M'_{t}/M'_{t-1}\otimes_{\widetilde{S}} A\cong H^{\mid \fp'_t\mid}_{\fp'_t}(A)$ is a simple $D_L (A)$-module. 
Hence,
if $m'_{-\alpha}=\#
\{t|M_{t}/M_{t-1}\otimes_S A\cong H^{|\alpha|}_{\fp_{\alpha}}(A)\hbox{ and } \fp_\alpha \subset Q\},$
we have that $m'_{-\alpha}\leq m_{n-j,-\alpha}.$
Hence, 
\begin{align*}
\lambda^{j+h}_{0}(\widetilde{S}/\widetilde{I})&= \Length_{D_K (\widetilde{S})}H^{\dim(\widetilde{S})-j-h}_{I}(\widetilde{S})\\
&=\Length_{D_L (\widetilde{S})}H^{n-j-h}_{\widetilde{I}}(\widetilde{S})\\
&= \sum_{\alpha} m'_{-\alpha}\\
& \leq  \sum_{\alpha} m_{n-j,-\alpha}\\
& =\Length_{D_K (S)}H^{n-j}_{I}(S)\\
& =\lambda^{j}_{0}(S_\m/IS_\m).
\end{align*}

Moreover, 
$$
\gamma_{i+h,j+h}(\widetilde{S}/\widetilde{I}) =\sum_{|\alpha|=n-j} m'_{-\alpha} 
\leq \sum_{|\alpha|=n-j} m_{n-j,-\alpha}=
\gamma_{i,j}(S/I).
$$
\end{proof}

\begin{remark}
As noted in Remark \ref{RemMultBetti}, the generalized Lyubeznik numbers and invariants $\gamma_{i,j}$ are given by adding multigraded Betti numbers of $I^\vee.$
A corollary of the proof for Proposition \ref{Prop Pol Gral LyuNum} is that
$$
\beta_{i,-\alpha}\left(\left(\sqrt{I}\right)^\vee\right)\leq \beta_{i,-(\alpha,0,\ldots,0)}\left(\left(\widetilde{I}\right)^\vee\right),
$$
where $(\alpha,0,\ldots,0)$ is the extension of the exponent $\alpha$
given the inclusion $S\to \widetilde{S}$ given by $x_j\mapsto x_{j,1}.$
We point out that this corollary is not a consequence of the fact that
$$
\beta_{i,-\alpha}(\left(\sqrt{I}\right)\leq
\beta_{i,-\alpha}(\left(I\right)= 
\beta_{i,-(\alpha,0,\ldots,0)}(\left(\widetilde{I}\right),
$$
which follows from \cite[Theorem $1.4$]{RadicalModules} and \cite[Theorem $2.14$]{Peeva}.
\end{remark}

\section{Acknowledgments}
We thank Josep \`Alvarez-Montaner, Craig Huneke, and Ryota Okazaki
for helpful comments and suggestions.
The first author was partially supported by NSF Grant DMS-$1259142.$
The second author was partially supported by the Council for Science and Technology of Mexico, CONACYT $207063.$ The third author was partially supported
by JSPS KAKENHI $25400057.$

\bibliographystyle{alpha}
\bibliography{References}

{\footnotesize

\noindent \small \textsc{Department of Mathematics, University of Virginia, Charlottesville, VA  22904-4137} \\ \indent \emph{Email address}:  {\tt ab4cb@virginia.edu} 

\vspace{.25cm}

\noindent \small \textsc{Department of Mathematics, University of Virginia, Charlottesville, VA  22904-4135} \\ \indent \emph{Email address}:  {\tt lcn8m@virginia.edu} 
 
\vspace{.25cm}

\noindent \small \textsc{Department of Mathematics, Kansai university, Suita 564-8680, Japan} \\ \indent \emph{Email address}:  {\tt yanagawa@ipcku.kansai-u.ac.jp} 
}

\end{document}